\documentclass[12pt]{amsart} 

\usepackage[margin=3cm]{geometry}
\usepackage[english]{babel}
\usepackage{hyperref}
\usepackage{amsmath}
\usepackage{amsthm}
\usepackage{amssymb}
\usepackage{bbm}
\usepackage{ucs}
\usepackage{enumerate}
\usepackage{tikz}
\usepackage{tikz-cd}
\usepackage[utf8]{inputenc}
\usepackage[T1]{fontenc}
\usepackage{xcolor}
\usepackage{mathtools}
\usepackage{comment}
\usepackage{dsfont}
\usepackage{faktor}
\usepackage{xcolor}
\definecolor{deepsea}{RGB}{0, 139, 139}

\hypersetup{
    colorlinks=true,       
    linkcolor=blue,    
    citecolor=cyan,    
    urlcolor=cyan      
}
\usepackage[draft,inline,nomargin,index]{fixme}

\fxsetup{theme=color,mode=multiuser}
\FXRegisterAuthor{pf}{apf}{\color{olive}PF}
\FXRegisterAuthor{flm}{aflm}{\color{red}FLM}
\FXRegisterAuthor{km}{akm}{\color{purple}KM}
\FXRegisterAuthor{ip}{aip}{\color{blue}IP}
\AtBeginDocument{%
   \def\MR#1{}
}

\numberwithin{equation}{section}

\newtheorem{introtheorem}{Theorem}

\newtheorem{theorem}{Theorem}[section]

\newtheorem{lemma}[theorem]{Lemma}
\newtheorem{proposition}[theorem]{Proposition}
\newtheorem{corollary}[theorem]{Corollary}

\theoremstyle{definition}
\newtheorem{definition}[theorem]{Definition}

\newtheorem{remark}[theorem]{Remark}

\newcommand{\R}{\mathbb{R}}

\newcommand{\C}{\mathbb{C}}
\newcommand{\N}{\mathbb{N}}

\newcommand{\id}{{\rm id}}

\DeclareMathOperator{\re}{Re}
\DeclareMathOperator{\im}{Im}
\DeclareMathOperator{\sa}{sa}
\DeclareMathOperator{\op}{op}
\DeclareMathOperator{\dom}{dom}

\DeclarePairedDelimiter{\ip}{\langle}{\rangle}

\newcommand{\norm}[1]{\left\| #1 \right\|}

\title{Simplicity of $q$-Gaussian $\mathrm{C}^*$-algebras}
\date{}
 \author[Amrutam]{Tattwamasi Amrutam}
\address{Tattwamasi Amrutam
\newline
Institute of Mathematics of the Polish Academy of Sciences, ul. \'{S}niadeckich 8, 00-656, Warszawa, Poland}
\email{tattwamasiamrutam@gmail.com}
\author[Jekel]{David Jekel}
\address{David Jekel
\newline
Department of Mathematical Sciences, University of Copenhagen
Universitetsparken 5, 2100 København (Denmark)}
\email{daj@math.ku.dk}
\author[Wasilewski]{Mateusz Wasilewski}
\address{Mateusz Wasilewski
\newline 
Institute of Mathematics of the Polish Academy of Sciences, ul. \'{S}niadeckich 8, 00-656, Warszawa, Poland}
\email{mwasilewski@impan.pl}
\begin{document}

\begin{abstract}
We show that $q$-Gaussian $\mathrm{C}^*$-algebras for $q \in (-1,1)$ have the Dixmier averaging property, and hence are simple with a unique trace.  We argue by combining rapid decay and spectral gap estimates for the commutators with the generators, which are obtained from free probability.
\end{abstract}

\maketitle

\section{Introduction}

The $q$-commutation relations $a_{i} a_{j}^{\ast} - q a_{j}^{\ast} a_{i} = \delta_{ij} \mathds{1}$ for $q\in [-1,1]$, first considered in \cite{FrischBourret1970}, are a natural generalization of the relations satisfied by bosonic ($q = 1$) and fermionic ($q = -1$) creation and annihilation operators.  Bo{\.z}ejko and Speicher \cite{BozejkoSpeicher1994} constructed operators on a Hilbert space satisfying these relations, and introduced the $q$-Gaussian algebras generated by the operators $a_{i} + a_{i}^{\ast}$. Since there is a natural trace on the $q$-Gaussian algebra, it can be completed in two different ways: to a $\mathrm{C}^*$-algebra and to a tracial von Neumann algebra. These $q$-Gaussian operator algebras were first seriously investigated in \cite{BozejkoKummererSpeicher1997}, where the authors showed simplicity of the $\mathrm{C}^{\ast}$-algebra and factoriality of the von Neumann algebra in the case of infinitely many generators.  However, most of the recent results on $q$-Gaussians have been about the von Neumann algebras, including important structural properties such as factoriality \cite{Ricard2005}, non-injectivity \cite{Nou2004}, complete metric approximation property and strong solidity \cite{Avsec2011}, and maximal amenability of the generator MASA for $|q| < 1/9$ \cite{PSW2018}.  Several of these results have also been generalized to the setting of $q$-Araki Woods factors, a natural non-tracial analog of $q$-Gaussian algebras \cite{KSW2023factoriality,Yang2024twisted,mandal2025simplicity}.

Many of these results were first established for $q=0$, in which the $q$-Gaussian operators are a free semicircular family, and hence the von Neumann algebra is isomorphic to a free group factor. For this reason, the $q$-Gaussian algebras are often viewed as deformations of free group factors.  In fact, in the case of finitely many generators, the $q$-Gaussian algebras are isomorphic to free group von Neumann algebras for sufficiently small $|q|$ (depending on the number of generators) by \cite{GuionnetShlyakhtenko2014}; this was also generalized to the $q$-Araki--Woods and twisted Araki--Woods settings by Nelson \cite{Nelson2015transport} and Yang \cite{Yang2024twisted}.  On the other hand, for infinitely many generators the $q$-Gaussian von Neumann algebra is not isomorphic to a free group factor as soon as $q \neq 0$ \cite{Caspers2023}; the non-isomorphism of the $\mathrm{C}^*$-algebras was obtained earlier in \cite{BorstCaspersKlisseWasilewski2023}.  The isomorphism problem for finitely many generators and large $q$ remains open.

Overall, compared with the litany of results for $q$-Gaussian von Neumann algebras, surprisingly little attention has been given to the $q$-Gaussian $\mathrm{C}^*$-algebras, with only a few results known in the case of infinitely many generators. This paper will establish several basic results about $q$-Gaussian $\mathrm{C}^*$-algebras for finitely many generators, namely, simplicity, uniqueness of the trace, and $\mathrm{C}^*$-algebraic Dixmier averaging.  We use free probability techniques to establish the von Neumann algebraic spectral gap property, and then upgrade this to Dixmier averaging for $\mathrm{C}^*$-algebras using the rapid decay property. To present the main results, we need to recall some useful definitions.  We assume familiarity with the basic theory of $\mathrm{C}^*$-algebras and tracial von Neumann algebras.

\begin{definition}
Let $A$ be a unital $\mathrm{C}^*$-algebra with a faithful trace $\tau$, and let $\norm{\cdot}_{\tau}$ denote the GNS norm.  Let $x = (x_j)_{j=1}^d$ in $A$.  We say that $(A,\tau,x)$ has \emph{spectral gap} if there exists a constant $C$ such that for all $y \in A$,
\[
\norm{y - \tau(y) 1}_{\tau} \leq C \sum_{j=1}^d (\norm{[y,x_j]}_\tau + \norm{[y^*,x_j]}_\tau).
\]
\end{definition}

While the rapid decay property has mostly been studied in the group context (as well as for quantum groups), the definition naturally generalizes to tracial $*$-algebras (see \cite{hayes2025selfless}).

\begin{definition}[{See \cite[Proposition 3.4]{Miyagawa2023} and \cite[Definition~3.2]{hayes2025selfless}}]
Let $A$ be a unital $\mathrm{C}^*$-algebra with a faithful trace $\tau$ and let $x = (x_j)_{j=1}^d$ generate $A$.  Let
\[
\rho(n) = \sup 
\left\{ \frac{\norm{f(x)}}{\norm{f(x)}_{\tau}}: f \text{ $*$-polynomial of degree } \leq n, f(x) \neq 0 \right\}.
\]
Then $(A,\tau,x)$ is said to have \emph{rapid decay} if $\rho(n)$ is bounded above by a polynomial in $n$.  It is said to have \emph{semirapid decay} if $\rho(n)$ grows subexponentially, that is, $\lim_{n \to \infty} \rho(n)^{1/n} = 1$.
\end{definition}
While rapid decay provides the necessary control over polynomial bounds, our primary goal relies on the Dixmier averaging property, which provides a direct route to proving simplicity and the uniqueness of the tracial state.
\begin{definition}[{See e.g.\ \cite{archbold2017dixmier}}]
A unital $\mathrm{C}^*$-algebra $A$ is said to have the \emph{Dixmier averaging property} if for every $y \in A$ and $\varepsilon > 0$, there exists $n \in \N$, unitaries $u_1$, \dots, $u_n \in A$, and $\lambda \in \C$ such that
\[
\norm{\frac{1}{n} \sum_{j=1}^n u_j y u_j^* - \lambda} < \varepsilon.
\]
\end{definition}

We recall that if $A$ has the Dixmier averaging property, then $A$ can have at most one tracial state, since any tracial state $\tau$ must satisfy $\frac{1}{n} \sum_{j=1}^n \tau(u_j y u_j^*) = \tau(y)$ but the constant $\lambda$ only depends on $\varepsilon$ and not on $\tau$.  Moreover, if $A$ has a faithful trace $\tau$, then the Dixmier averaging property implies that $A$ is simple, because if $I$ is a closed ideal containing a nonzero element $x$, then by Dixmier averaging $\tau(x^*x) \in I$.  We will obtain Dixmier averaging for the $q$-Gaussian $\mathrm{C}^*$-algebras using the following general result that spectral gap and rapid decay imply Dixmier averaging.

\begin{introtheorem} \label{thm: general Dixmier averaging}
Let $A$ be a $\mathrm{C}^*$-algebra with a faithful trace $\tau$.  Suppose that $x = (x_j)_{j=1}^d$ generates $A$ and that $(A,\tau,x)$ has a spectral gap and that $\lim_{n \to \infty} \rho(n^2)^{1/n} = 0$ (which includes, in particular, the case of rapid decay).  Then $A$ has the Dixmier averaging property, and so in particular it is simple and has a unique trace.
\end{introtheorem}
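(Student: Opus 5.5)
Write $\|\cdot\|_2$ for the GNS norm $\|\cdot\|_\tau$. I read the growth hypothesis as saying that $\rho(m^2)$ grows subexponentially in $m$. Since $\rho\ge 1$, this is the meaning compatible with the parenthetical remark about rapid decay.

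The plan has four steps. First convert the commutator spectral gap into a spectral gap for a unital averaging map built from unitaries. Then iterate it to get exponential decay in $\|\cdot\|_2$. Then approximate the unitaries by polynomials so that rapid decay upgrades the $L^2$ decay to operator-norm decay. Finally observe that the resulting map is a convex combination of unitary conjugations.

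\emph{Step 1: reduction to unitaries.} Replacing each $x_j$ by its real and imaginary parts, I may assume the $x_j$ are self-adjoint. This changes $\rho$ at most harmlessly, since degrees are unchanged, and the spectral gap inequality persists because $\|[y,x_j]\|_2$ is controlled by the commutators with the real and imaginary parts and their adjoints. Fix $t>0$ with $t\max_j\|x_j\|<1/2$ and set $u_j=e^{itx_j}$. Then $a_j=(u_j-u_j^*)/2i=\sin(tx_j)$ has $\|a_j\|<1/2$, and $x_j=t^{-1}\arcsin(a_j)$ is given by an absolutely norm-convergent power series in $a_j$. Using $\|[y,a^k]\|_2\le k\|a\|^{k-1}\|[y,a]\|_2$ and $\|[y,a_j]\|_2\le\|[y,u_j]\|_2$, we get $\|[y,x_j]\|_2\le C_t\|[y,u_j]\|_2=C_t\|y-u_jyu_j^*\|_2$. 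The same holds for $y^*$, and $\|[y^*,u_j]\|_2=\|[y,u_j^*]\|_2$.

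Define the unital, $\tau$-preserving map
\[
\Phi(y)=\tfrac12 y+\tfrac{1}{4d}\sum_j\bigl(u_jyu_j^*+u_j^*yu_j\bigr).
\]
It is a self-adjoint contraction on $L^2(A,\tau)$ with $\Phi\ge 0$, and
\[
\langle (1-\Phi)y,y\rangle_\tau=\tfrac1{8d}\sum_j\bigl(\|[u_j,y]\|_2^2+\|[u_j^*,y]\|_2^2\bigr).
\]
Combined with the spectral gap, this gives $\langle(1-\Phi)y,y\rangle\ge c\|y-\tau(y)\|_2^2$. Since $\Phi$ preserves $L^2_0=\{\tau=0\}$, we get $\|\Phi|_{L^2_0}\|\le 1-c$, and hence $\|\Phi^m(y)-\tau(y)\|_2\le(1-c)^m\|y\|_2$.

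\emph{Step 2: from $L^2$ to norm.} Given $y\in A$ and $\varepsilon>0$, pick a $*$-polynomial $f$ of degree $n$ with $\|y-f(x)\|<\varepsilon$. Since $\Phi$ is a norm contraction and $|\tau(y)-\tau(f)|<\varepsilon$, it suffices to make $\|\Phi^m(f)-\tau(f)\|$ small.

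Let $p_{j,k}$ be the degree-$k$ Taylor polynomial of $e^{\pm itx_j}$, with norm error $\delta_k\le e^{t\|x\|}(t\|x\|)^{k+1}/(k+1)!$. Let $\Psi$ be $\Phi$ with each $u_j^{\pm1}$ replaced by $p_{j,k}$. A telescoping estimate gives
\[
\|\Phi^m(f)-\Psi^m(f)\|\le m\cdot O(\delta_k)\,(1+\delta_k)^{2m}\|f\|.
\]
Since $\|\cdot\|_2\le\|\cdot\|$, the same bound holds in $L^2$. The element $\Psi^m(f)$ is a polynomial of degree at most $n+2mk$, and $\tau(\Psi^m f)$ is close to $\tau(f)$. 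Therefore
\[
\|\Psi^m(f)-\tau(f)\|\le \rho(n+2mk)\Bigl((1-c)^m\|f\|_2+O\bigl(m\,\delta_k(1+\delta_k)^{2m}\|f\|\bigr)\Bigr).
\]

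\emph{Step 3: balancing (the main obstacle).} Choose $k=m$. Then $\delta_m$ decays superexponentially, so $(1+\delta_m)^{2m}\to1$ and $m\delta_m$ is much smaller than $(1-c)^m$. The degree is $n+2m^2\le 3m^2$ for $m\ge n$. By hypothesis $\rho(3m^2)^{1/m}\to1$; since $\rho$ is monotone, I would pass from $\rho(m^2)$ to $\rho(3m^2)$ using $3m^2\le(2m)^2$. The right-hand side is then $\rho(3m^2)(1-c)^m\cdot O(\|f\|)\to0$.

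This balancing is exactly where the quadratic degree growth arises. Two error sources must be checked carefully to keep the constants uniform in $m$: the degree growth coming from the polynomial truncation, and the non-unitarity of the $p_{j,k}$ inside the iteration.

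\emph{Step 4: conclusion.} Choosing $m$ large gives $\|\Phi^m(y)-\tau(y)1\|<3\varepsilon$. Expanding $\Phi^m$, and counting the term $\tfrac12 y$ as conjugation by $1$, shows that $\Phi^m(y)$ is an average of unitary conjugates $vyv^*$ with rational equal weights after repeating terms, where each $v$ is a word in the $u_j^{\pm1}$. This is exactly Dixmier averaging with $\lambda=\tau(y)$. Simplicity and uniqueness of the trace then follow by the remarks preceding Theorem~\ref{thm: general Dixmier averaging}.
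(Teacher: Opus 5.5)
Your proposal is correct and follows essentially the same route as the paper: reduce to self-adjoint generators, pass to the unitaries $e^{itx_j}$ with equivalent commutator spectral gap, iterate an averaging map to get exponential $L^2$ decay, truncate the exponentials at Taylor degree $k=m$ with a telescoping error bound, and apply rapid decay at degree $O(m^2)$. The differences are small refinements. You recover $x_j$ via $\arcsin$ of $(u_j-u_j^*)/2i$ instead of the logarithm series, and you absorb $\deg f$ explicitly via $n+2m^2\le(2m)^2$. Your lazy map $\Phi=\tfrac12(\mathrm{id}+T)$ also makes $\|\Phi|_{L^2_0}\|\le 1-c$ immediate from positivity, whereas the paper's bound $\|T(y)\|_\tau\le(1-\gamma)\|y\|_\tau$ tacitly needs $T$ to stay away from $-1$ on $L^2_0$; that does hold there, since $T\ge 1-2\max_j\|u_j-1\|^2>-1$.
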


In the case of unitary generators $(u_j)_{j=1}^d$, the proof of the theorem is much easier; one simply defines the averaging operator $Ty = \frac{1}{2d} \sum_{j=1}^d (u_j y u_j^* + u_j^* y u_j)$ and then applies the rapid decay estimate to $\norm{T^m y - \tau(y)}$ for $y$ a polynomial in the $u_j$'s (and semi-rapid decay would be sufficient as well).  This was already exploited for group $\mathrm{C}^*$-algebras in \cite[proof of Theorem 4.4.5]{Olesen2016}; here spectral gap for the group generators is equivalent to $G$ not being inner amenable (see e.g.\ \cite[\S 4]{Vaes2012}).

For non-unitary generators, we can first assume without loss of generality that the $x_j$'s are self-adjoint with $\norm{x_j} \leq 1$ and then use the unitaries $u_j = \exp(ix_j)$ to define an associated averaging operator $T$.  The challenge is that we cannot immediately apply the rapid decay estimate to $T^m y$ since $u_j$ is not a polynomial in the given generators $x_j$. Hence, we must truncate the power series of the exponential $e^{ix_n}$ and estimate the errors carefully (see \S \ref{sec: Dixmier proof}).

In order to apply Theorem \ref{thm: general Dixmier averaging} to the $q$-Gaussian family, we need both rapid decay and spectral gap.  The rapid decay property was established by \cite{Bozejko1999ultracontractivity}.  We obtain spectral gap from a general free probabilistic criterion (for definitions and precise constants, see \S \ref{sec: free probability} below).

\begin{introtheorem}[{See Theorem \ref{thm: Lipschitz conjugate variables and spectral gap 2}}] \label{thm: Lipschitz conjugate variables and spectral gap}
Let $(M,\tau)$ be a tracial von Neumann algebra generated by a self-adjoint tuple $x = (x_1,\dots,x_d)$ with $d \geq 2$.  If $x$ has Lipschitz conjugate variables, then $(M,\tau,x)$ has spectral gap.
\end{introtheorem}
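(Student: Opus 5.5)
The plan is to reduce spectral gap to an estimate in $L^2(M\bar\otimes M^{\op})$, using Voiculescu's free difference quotients $\partial_j$. These are densely defined on $\C\langle x\rangle$, with $\partial_j x_i = \delta_{ij}\,1\otimes 1$, and the conjugate variables are $\xi_j=\partial_j^*(1\otimes 1)$. ``Lipschitz'' means $\xi_j\in\dom\bar\partial_i$ with $\bar\partial_i\xi_j\in M\bar\otimes M^{\op}$. It suffices to prove the inequality for $y\in\C\langle x\rangle$ and then pass to $M$ by Kaplansky density, since both sides are $\norm{\cdot}_\tau$-continuous under bounded convergence. Write $\eta_y=y\otimes 1-1\otimes y$. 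Since $\norm{\eta_y}_2^2=2\norm{y-\tau(y)}_\tau^2$, it is enough to bound $\norm{\eta_y}_2$ by a constant times $\sum_j\norm{[y,x_j]}_\tau$.

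The basic algebraic identity is the Leibniz rule applied to $[y,x_j]=yx_j-x_jy$:
\[
\partial_j([y,x_j]) = \eta_y + [\partial_j y, x_j],
\]
where $[\cdot,x_j]$ acts on $M\otimes M^{\op}$ through the bimodule structure. Pairing with a suitable test vector $\zeta$ gives
\[
\ip{\eta_y,\zeta}=\ip{[y,x_j],\partial_j^*\zeta}-\ip{\partial_j y,[\zeta,x_j]}.
\]
The first term is already controlled by $\norm{[y,x_j]}_\tau$ as soon as $\partial_j^*\zeta$ is bounded in $L^2$. The Lipschitz hypothesis is exactly what makes $\partial_j^*$ behave well on vectors of the form $a\otimes b$ with $a,b$ in the domain. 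Concretely, I would use
\[
\partial_j^*(a\otimes b)=a\xi_j b-m\circ(1\otimes\tau\otimes 1)(\partial_j a\otimes b)-m\circ(1\otimes\tau\otimes1)(a\otimes\partial_j b),
\]
together with the Dabrowski-type consequence that $\partial_j^*\partial_j$ has a bounded ``$\xi$-part.'' From these I would get $\norm{\partial_j^*\zeta}_2\le C\norm{\zeta}_{\op}$ for $\zeta$ in the range of a regularized version of $\partial_j$.

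The strategy is to take $\zeta$ to be a smoothed copy of $\eta_y$. The second term is the real problem. One has $[\eta_y,x_j]=[y,x_j]\otimes1-1\otimes[y,x_j]$, which is again commutator-controlled, but it gets paired against $\partial_j y$, which is unbounded in terms of $y$. To handle this I would introduce the resolvent/Dirichlet-form regularization $\Delta_\alpha=\alpha/(\alpha+\partial^*\bar\partial)$, as in Dabrowski's proof of non-$\Gamma$. Lipschitz conjugate variables make $\bar\partial_j\Delta_\alpha^{1/2}$ bounded from $M$ into $M\bar\otimes M^{\op}$, uniformly controlled, and make the commutators $[\Delta_\alpha,\cdot\,x_j]$ small in norm.

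I would apply the identity to $\Delta_\alpha y$ in place of $y$. After summing over $j$, this should give an estimate of the form
\[
\norm{\eta_{\Delta_\alpha y}}_2^2\le C_\alpha\sum_j\norm{[y,x_j]}_\tau\norm{y}_\tau,
\]
with $\norm{y-\Delta_\alpha y}_\tau$ also controlled by $\alpha^{-1/2}$ times a commutator quantity. The hypothesis $d\ge 2$ enters when summing over $j$: the cross terms produce $\sum_j\ip{\partial_j y,\partial_j y}$ minus a term of size $\frac1d$-fraction of $\norm{\eta_y}^2$, in the spirit of the free Poincar\'e inequality. This is what leaves a strictly positive coefficient in front of $\norm{\eta_y}^2$; for $d=1$ that coefficient would vanish.

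The main obstacle is this last step: arranging the regularization so that the unbounded term $\ip{\partial_jy,[\eta,x_j]}$ is absorbed with a constant uniform in $y$. What I would try first is to exploit $\xi_j\in\dom\bar\partial_i$ to compute the commutator $[\partial_j^*\partial_j, L_{x_j}]$ explicitly, and to show it is a bounded operator involving only $\bar\partial\xi$. The final step is to remove the $\norm{y}_\tau$ on the right by homogeneity: apply the inequality to $y-\tau(y)$ and divide by $\norm{y-\tau(y)}_\tau$. Adding the $y^*$ terms then gives the form stated in the definition of spectral gap.
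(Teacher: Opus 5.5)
There is a genuine gap, and it sits at the step you identify as the main obstacle. Your starting point, $\partial_j([y,x_j])=\eta_y+[\partial_j y,x_j]$ paired against $\eta_y$, is the same as the paper's. The next step, however, is wrong. For the bimodule commutator appearing there, $[\eta_y,x_j]\neq[y,x_j]\otimes 1-1\otimes[y,x_j]$. For example, with $y=x_j$ the right side is $0$, while $[\eta_{x_j},x_j]=-(x_j\otimes 1-1\otimes x_j)^2\neq 0$. If your identity held, the argument would give spectral gap for each single $j$, hence for $d=1$, where $M$ is abelian and all commutators vanish. The correct statement holds only after summing over $j$. Using the Jacobi identity, traciality, and $\sum_j \partial_j y\,\#\,[x_j,1\otimes 1]=\eta_y$, one gets
\[
\sum_{j}\langle \eta_y,[\partial_j y,x_j]\rangle=\sum_{j}\langle \eta_{[y,x_j]},\partial_j y\rangle-\norm{\eta_y}_{\tau\otimes\tau}^2 .
\]
This yields Dabrowski's identity
\[
(d-1)\norm{\eta_y}^2_{\tau\otimes\tau}=\sum_j\langle\eta_y,\partial_j[y,x_j]\rangle-\sum_j\langle\eta_{[y,x_j]},\partial_j y\rangle .
\]
The extra $-\norm{\eta_y}^2$ is exactly where $d\ge 2$ enters. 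Your ``$\frac1d$-fraction'' heuristic does not correspond to any actual computation.

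Once this identity is in place, the obstacle you flag disappears, and your regularization is both unnecessary and unworkable as sketched. The missing ingredient is Mai's identity:
\[
\norm{\partial_j^*(g\otimes 1)}_\tau^2=\langle g^*g\otimes 1,\overline{\partial}_j\xi_j\rangle\le\norm{\overline{\partial}_j\xi_j}_{M\overline{\otimes}M^{\op}}\norm{g}_\tau^2 .
\]
It follows from the formula for $\partial_j^*(a\otimes b)$, the flip relation $\partial_j(f^*)=\operatorname{flip}((\partial_j f)^*)$, and coassociativity of $\partial_j$. It says $\partial_j^*$ is bounded in $L^2$-norm on $L^2(M)\otimes 1$, and by the flip also on $1\otimes L^2(M)$. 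Hence $|\langle\eta_g,\partial_j h\rangle|\le 2\norm{\overline{\partial}_j\xi_j}^{1/2}\norm{g}_\tau\norm{h}_\tau$. Both terms of Dabrowski's identity have this form, with $(g,h)=(y,[y,x_j])$ and $([y,x_j],y)$. The ``unbounded'' $\partial_j y$ is simply moved to the other side as $\partial_j^*\eta_{[y,x_j]}$. This gives $(d-1)\norm{y-\tau(y)}_\tau^2\le 2\sum_j\norm{\overline{\partial}_j\xi_j}^{1/2}\norm{[y,x_j]}_\tau\norm{y-\tau(y)}_\tau$ directly.

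By contrast, your operator-norm bound $\norm{\partial_j^*\zeta}_2\le C\norm{\zeta}$ would not produce a homogeneous $L^2$ inequality. Your claim that $\norm{y-\Delta_\alpha y}_\tau$ is at most $\alpha^{-1/2}$ times commutator norms cannot hold uniformly in $y$ once $\alpha$ is large. For a free semicircular family, $\partial^*\overline{\partial}$ is the number operator. So a Wick word $y$ of degree $n$ has $\norm{y-\Delta_\alpha y}_\tau=\frac{n}{n+\alpha}\norm{y}_\tau$, while $\norm{[y,x_j]}_\tau\le 4\norm{y}_\tau$.
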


Although this has not been explicitly noted in the literature, it is quite close to the results of Dabrowski.  Indeed, in the process of showing that the existence of conjugate variables implies the von Neumann algebra is non-Gamma, Dabrowski notes in \cite[Remark 11]{Dabrowski2014} that the spectral gap would follow from having bounded second-order conjugate variables.  In turn, the methods of \cite{Dabrowski2014} imply that bounded second-order conjugate variables exist provided that $x$ has Lipschitz conjugate variables, which Diez has also shown~\cite[Proposition 2.14]{Diez2026}.  In the interest of clarity, we give in \S \ref{sec: free probability} a self-contained argument for Theorem \ref{thm: Lipschitz conjugate variables and spectral gap} combining parts of \cite[proof of Lemma 9]{Dabrowski2010} with an identity of Mai \cite[Corollary V.5.6]{MaiThesis} (which was also generalized to the operator-valued setting by Lee \cite[Lemma 3.2]{Lee2024conjugate}).

Finally, to apply Theorem \ref{thm: Lipschitz conjugate variables and spectral gap} to the $q$-Gaussians, we invoke the recent result of Miyagawa and Speicher \cite{MiyagawaSpeicher2023} that a finite $q$-Gaussian family has Lipschitz conjugate variables for all $q \in (-1,1)$ (this was shown earlier for small $q$ by Dabrowski \cite[\S 4.3]{Dabrowski2014}); see also \cite{KSW2023factoriality,Kumar2023conjugate,Yang2024twisted} for generalizations to the deformed Araki--Woods setting.  We thus conclude the following results about the $q$-Gaussian $\mathrm{C}^*$-algebras.

\begin{introtheorem} \label{thm: q-Gaussian case}
Let $q \in (-1,1)$.  Let $d \in \N \cup \{\infty\}$.  Let $A$ be the $\mathrm{C}^*$-algebra generated by a $q$-Gaussian family $x = (x_j)_{j=1}^d$.  Then $A$ has the Dixmier averaging property, has a unique trace, and is simple.
\end{introtheorem}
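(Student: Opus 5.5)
The plan is to treat finite $d \geq 2$ first, then reduce $d = 1$ and $d = \infty$ to it.

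\textbf{Finite $d \geq 2$.} Here we apply Theorem \ref{thm: general Dixmier averaging} to $A = \mathrm{C}^*(x)$ with the vacuum trace $\tau$. This trace is faithful on $A$, since the vacuum vector is separating for the $q$-Gaussian von Neumann algebra. Two hypotheses need checking.

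\begin{enumerate}[(i)]
\item \emph{Rapid decay.} By \cite{Bozejko1999ultracontractivity}, for $f(x)$ a polynomial of degree $\leq n$ we have $\norm{f(x)} \leq C_q (n+1)^{k}\norm{f(x)}_\tau$ for some constants $C_q$ and $k$. We get this from the Haagerup-type inequality on each homogeneous Wick-word component, using that the Fock spaces of length $\leq n$ exhaust the image of degree-$\leq n$ polynomials. Summing over the $n+1$ homogeneous pieces and applying Cauchy--Schwarz gives polynomial growth of $\rho(n)$. Hence $\rho(n^2)$ is polynomial in $n$, so $\rho(n^2)^{1/n}$ stays bounded and tends to $1$. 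The limit condition in Theorem \ref{thm: general Dixmier averaging} should be read as covering this case, as its parenthetical remark indicates.
\item \emph{Spectral gap.} By \cite{MiyagawaSpeicher2023}, $x$ has Lipschitz conjugate variables for every $q\in(-1,1)$. Theorem \ref{thm: Lipschitz conjugate variables and spectral gap} then gives spectral gap for $(M,\tau,x)$, and restricting to $y \in A \subseteq M$ gives it for $(A,\tau,x)$.
\end{enumerate}

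Theorem \ref{thm: general Dixmier averaging} now yields Dixmier averaging. Simplicity and uniqueness of the trace follow as explained after the definition of the Dixmier averaging property.

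\textbf{The case $d=1$.} Theorem \ref{thm: Lipschitz conjugate variables and spectral gap} needs $d\ge 2$, so this case must be handled separately. With one generator, $A \cong C(\sigma(x_1))$ is commutative and its spectrum is an interval. This algebra is not simple, so the theorem as stated should be understood for $d\geq 2$, as in the classical statement of \cite{BozejkoKummererSpeicher1997}. I would flag this and take $d\ge2$ throughout.

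\textbf{Infinite $d$.} Let $A_N = \mathrm{C}^*(x_1,\dots,x_N)\subseteq A$. Because of the functoriality of the $q$-Fock construction, this is the $q$-Gaussian algebra on $N$ generators with the restricted trace. The union $\bigcup_N A_N$ is dense in $A$. Given $y\in A$ and $\varepsilon>0$, choose $N \geq 2$ and $y'\in A_N$ with $\norm{y-y'}<\varepsilon/2$. Apply Dixmier averaging in $A_N$ to $y'$, obtaining unitaries $u_1,\dots,u_n \in A_N\subseteq A$ and $\lambda \in \C$ with $\norm{\frac1n\sum_j u_j y' u_j^*-\lambda}<\varepsilon/2$. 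Averaging is contractive, so the same unitaries give $\norm{\frac1n\sum_j u_j y u_j^*-\lambda}<\varepsilon$. Simplicity and uniqueness of the trace then follow as before; faithfulness of $\tau$ on $A$ again comes from the vacuum vector being separating.

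\textbf{Main obstacle.} Given the cited results, the only substantive point is verifying that the rapid decay bound of \cite{Bozejko1999ultracontractivity}, stated for Wick products, transfers to the $\rho(n)$ of the definition. This requires noting that a degree-$\leq n$ $*$-polynomial in $x$ is a combination of Wick words of length $\leq n$, orthogonal across lengths in $L^2(\tau)$.
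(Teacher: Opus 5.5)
Your proposal is correct and follows the paper's own proof. The paper reduces to finite $d$ because Dixmier averaging passes to inductive limits. It then combines Bo{\.z}ejko's rapid decay with the Miyagawa--Speicher Lipschitz conjugate variables, via Theorem~\ref{thm: Lipschitz conjugate variables and spectral gap}, to feed Theorem~\ref{thm: general Dixmier averaging}.

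Your side remarks are also right. The limit condition in Theorem~\ref{thm: general Dixmier averaging} must read $=1$, since $\rho\geq 1$. For $d=1$ the algebra $C(\sigma(x_1))$ is commutative and not simple, so the statement as written needs $d\geq 2$. The paper's proof silently assumes this when it invokes Theorem~\ref{thm: Lipschitz conjugate variables and spectral gap}.
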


Assuming Theorems \ref{thm: general Dixmier averaging} and \ref{thm: Lipschitz conjugate variables and spectral gap}, the proof for Theorem~\ref{thm: q-Gaussian case} is short, and therefore, we include it here.
\begin{proof}[Proof of Theorem \ref{thm: q-Gaussian case}]
As mentioned above, unique trace and $\mathrm{C}^*$-simplicity follow from Dixmier averaging.  It is straightforward to check that the Dixmier averaging property is preserved by inductive limits.  Hence, it suffices to prove the claim for $d < \infty$.

The work of Bo{\.z}ejko \cite[p. 210, Main Theorem, item (a)]{Bozejko1999ultracontractivity} shows that $A = \mathrm{C}^*(x_1,\dots,x_d)$ has rapid decay.  Moreover, the work of Miyagawa and Speicher \cite[Theorem 1.1]{MiyagawaSpeicher2023} shows that $(x_j)_{j=1}^d$ has Lipschitz conjugate variables, which implies spectral gap by Theorem \ref{thm: Lipschitz conjugate variables and spectral gap}.  Hence, $A$ has Dixmier averaging by Theorem \ref{thm: general Dixmier averaging}.
\end{proof}

\begin{remark}
Theorem \ref{thm: q-Gaussian case} also generalizes to mixed $q$-Gaussian algebras.  The existence of Lipschitz conjugate variables follows from \cite[Remark 5.4]{MiyagawaSpeicher2023} or \cite{Kumar2023conjugate}.  The rapid decay estimate of Bo{\.z}ejko also generalizes to the mixed $q$-Gaussian setting with a similar proof, as shown by Kr{\.o}lak \cite[Theorem 29]{Krolak2005}.
\end{remark}

The rest of the paper is organized into two sections:  \S \ref{sec: Dixmier proof}, which proves Theorem \ref{thm: general Dixmier averaging}, and \S \ref{sec: free probability}, which proves Theorem \ref{thm: Lipschitz conjugate variables and spectral gap}.

\subsection*{Acknowledgements}

We thank Akihiro Miyagawa for pointing us to the estimates in Mai's thesis used in \S \ref{sec: free probability}.  We also thank Charles-Philippe Diez, Srivatsav Kunnawalkam Elayavalli, and Brent Nelson for pointing out additional references.  We thank IMPAN for hosting DJ's visit in May 2026, where this project began; travel funding was provided by the National Science Center, Poland (NCN) grant no. 2021/43/D/ST1/01446.

\subsection*{Funding}

DJ was supported by an EU Horizon Marie Sk{\l}odowska Curie Action\footnote{Views and opinions expressed are those of the author(s) only and do not necessarily reflect those of the European Union or the Research Executive Agency. Neither the European Union nor the granting authority can be held responsible for them.}, FREEINFOGEOM, grant id: 101209517. MW was partially supported by the National Science Center, Poland (NCN) grant no. 2021/43/D/ST1/01446.

\section{Proof of Theorem \ref{thm: general Dixmier averaging} on Dixmier averaging} \label{sec: Dixmier proof}

To leverage the spectral gap property in the context of Dixmier averaging, it is simpler to work with unitary elements. The following preliminary lemma allows us to pass the spectral gap property freely between self-adjoint generators and their unitary exponentials.

\begin{lemma}
Let $A$ be a $\mathrm{C}^*$-algebra with a faithful trace $\tau$.  Let $(x_j)_{j=1}^d$ be self-adjoint elements of $A$ with $\norm{x_j} \leq 1$.  Let $u_j = \exp(ix_j)$ and $u = (u_j)_{j=1}^d$.  Then $(A,\tau,x)$ has spectral gap if and only if $(A,\tau,u)$ has spectral gap.
\end{lemma}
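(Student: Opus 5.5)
The plan is to show that the two commutator quantities are comparable: for every $y \in A$ and each $j$ there are universal constants $c, C > 0$ with
\[
c\,\norm{[y,x_j]}_\tau \le \norm{[y,u_j]}_\tau \le C\,\norm{[y,x_j]}_\tau ,
\]
and similarly with $y^*$ in place of $y$. Once we have this, the spectral gap inequality for one tuple transfers to the other, with the constant changed by a factor of $C$ or $1/c$. The left-hand side $\norm{y-\tau(y)1}_\tau$ is the same in both cases, so nothing else needs to be checked.

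\emph{Upper bound.} Expand $u_j = \sum_{k\ge 0} (ix_j)^k/k!$, which converges in norm. Use the derivation identity
\[
[y,x_j^k] = \sum_{l=0}^{k-1} x_j^l\,[y,x_j]\,x_j^{k-1-l}.
\]
The GNS norm satisfies $\norm{azb}_\tau \le \norm{a}\,\norm{z}_\tau\,\norm{b}$ because $\tau$ is a trace. With $\norm{x_j}\le 1$ this gives $\norm{[y,x_j^k]}_\tau \le k\,\norm{[y,x_j]}_\tau$. Summing over $k$,
\[
\norm{[y,u_j]}_\tau \le \sum_{k\ge 1} \frac{k}{k!}\,\norm{[y,x_j]}_\tau = e\,\norm{[y,x_j]}_\tau .
\]

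\emph{Lower bound.} This is the main obstacle: we have to recover $x_j$ from $u_j$ by a norm-convergent series in $u_j$ whose coefficients have finite weighted $\ell^1$ norm. The spectrum of $u_j$ lies in the arc $\{e^{i\theta} : |\theta|\le 1\}$, which stays away from $-1$, so $x_j = -i\log u_j$ via the principal logarithm. I would write this as a power series in $u_j - 1$:
\[
x_j = -i\log(1+(u_j-1)) = -i\sum_{k\ge 1} \frac{(-1)^{k+1}}{k}\,(u_j-1)^k .
\]
Here $\norm{u_j - 1} \le \sup_{|\theta|\le 1} |e^{i\theta}-1| = 2\sin(1/2) =: r < 1$, so the series converges. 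The same derivation argument applies with $w = u_j - 1$, since $[y,w] = [y,u_j]$. It gives
\[
\norm{[y,w^k]}_\tau \le k\,r^{k-1}\,\norm{[y,u_j]}_\tau ,
\]
and therefore
\[
\norm{[y,x_j]}_\tau \le \Bigl(\sum_{k\ge 1} r^{k-1}\Bigr)\norm{[y,u_j]}_\tau = \frac{1}{1-r}\,\norm{[y,u_j]}_\tau .
\]
This is the lower bound with $c = 1-r$.

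The same bounds hold with $y^*$ in place of $y$, since they were proved for arbitrary $y$. Summing over $j$ then proves both directions of the equivalence.
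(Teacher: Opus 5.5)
Your proof is correct and follows essentially the same route as the paper. In both, expanding $u_j = e^{ix_j}$ and using the derivation identity gives the factor $e$. The principal-logarithm series in $u_j - 1$, with $\norm{u_j - 1} \le 2\sin(1/2) = \sqrt{2 - 2\cos 1} < 1$, gives the reverse bound; you make its constant explicit as $1/(1-r)$, which the paper leaves implicit.
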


\begin{proof}
We need to relate the $L^2$-derivations associated with $x$ and $u$. Since $u_j = \sum_{k=0}^\infty \frac{i^k}{k!} x_j^k$, the commutator with any element $y \in A$ is given by
\[ [u_j, y] = \sum_{k=1}^\infty \frac{i^k}{k!} \sum_{l=0}^{k-1} x_j^l [x_j, y] x_j^{k-1-l}. \]
Taking the $L^2$-norm and using the fact that $\norm{x_j} \leq 1$, we obtain that
\begin{align*} \norm{[u_j, y]}_{\tau} &\leq \sum_{k=1}^\infty \frac{1}{k!} k \norm{x_j}^{k-1} \norm{[x_j, y]}_{\tau} \\&\leq \left( \sum_{k=1}^\infty \frac{1}{(k-1)!} \right) \norm{[x_j, y]}_{\tau} \\&= e \norm{[x_j, y]}_{\tau}. \end{align*}
A similar bound holds for $\norm{[u_j^*, y]}_{\tau}$. Therefore, if $x$ has a spectral gap, $u$ also has a spectral gap.

Conversely, we can recover $x_j$ via the principal logarithm so that $$x_j = -i \log(u_j) = -i \sum_{k=1}^\infty \frac{(-1)^{k-1}}{k} (u_j - 1)^k.$$   Note that for $t \in [-1,1]$, $|1 - e^{it}|^2 = 2 - 2 \cos t \leq 2 - 2 \cos 1 < 1$.  Letting $r = \sqrt{2 - 2 \cos 1}$, we have by functional calculus that $\norm{u_j - 1} \leq r < 1$, ensuring the series converges absolutely in the operator norm. By expanding the commutators of $(u_j - 1)^k$ similarly, we can bound $\norm{[x_j, y]}_{\tau}$ by a constant multiple of $\norm{[u_j, y]}_{\tau}$ plus the bound on the adjoint. Hence, the spectral gaps are equivalent.
\end{proof}

With this equivalence in hand, we proceed to prove our first main result. The core strategy is to approximate the averaging operator via a truncated Taylor series while controlling the resulting uniform and $L^2$ errors.

\begin{proof}[Proof of {Theorem \ref{thm: general Dixmier averaging}}]
First, let $\re(x_j) = (x_j + x_j^*)/2$ and $\im(x_j) = (x_j - x_j^*)/2i$. Note that
\[
\norm{[x_j,y]}_{\tau} + \norm{[x_j^*,y]}_{\tau} \leq 2 \left[ \norm{[\re(x_j),y]}_{\tau} + \norm{[\im(x_j),y]}_{\tau} \right].
\]
Thus, by replacing $(x_1,\dots,x_d)$ by $(\re(x_1),\im(x_1),\dots,\re(x_d),\im(x_d))$, we can assume that our spectral gap tuple consists of self-adjoint elements. Note that rapid decay is also preserved because $*$-polynomials up to degree $n$ in $(x_1,\dots,x_d)$ are the same as $*$-polynomials up to degree $n$ in $(\re(x_1),\im(x_1),\dots,\re(x_d),\im(x_d))$. Thus, for the rest of the proof, we assume, without loss of generality, that $x_1$, \dots, $x_d$ are self-adjoint and, by rescaling, we also assume that $\norm{x_j} \leq 1$.

Let $u_j = \exp(ix_j)$ for $j=1,\dots,d$. By the preceding lemma, the tuple of unitaries $u = (u_1, \dots, u_d)$ has spectral gap. Let $T: A \to A$ be the averaging operator defined by
\[
T(y) = \frac{1}{2d} \sum_{j=1}^d (u_j y u_j^* + u_j^* y u_j).
\]
The spectral gap property implies the existence of a constant $\gamma \in (0,1)$ such that for all $y \in A$ with $\tau(y) = 0$ we have
$\norm{T(y)}_{\tau} \leq (1-\gamma)\norm{y}_{\tau}$. By iterating $T$, we obtain $\norm{T^n(y)}_{\tau} \leq (1-\gamma)^n \norm{y}_{\tau}$ for all $n \in \N$. We now approximate the unitaries $u_j$ by their Taylor polynomials of degree $m$
\[
u_{j,m} = \sum_{k=0}^m \frac{i^k}{k!} x_j^k.
\]
Since the derivative of order $m+1$ of $e^{it}$ is bounded by $1$ for $t \in \R$, we have the Taylor remainder estimate
\[
\sup_{t \in [-1,1]} \left| e^{it} - \sum_{k=0}^m \frac{(it)^k}{k!} \right| \leq \frac{1}{(m+1)!}.
\]
Since $\norm{x_j} \leq 1$, the properties of functional calculus imply that
\[
\norm{u_j - u_{j,m}} \leq \frac{1}{(m+1)!}.
\]
Define the corresponding truncated averaging operator $T_m: A \to A$ by
\[
T_m(y) = \frac{1}{2d} \sum_{j=1}^d (u_{j,m} y u_{j,m}^* + u_{j,m}^* y u_{j,m}).
\]
If $y$ is a $*$-polynomial in $x$ of degree $\deg(y)$, then $T_m(y)$ is a $*$-polynomial of degree at most $\deg(y) + 2m$. Consequently, $\deg(T_m^n(y)) \leq \deg(y) + 2mn$. By the definition of the growth function $\rho$, we have
\[
\norm{T_m^n(y)} \leq \rho(\deg(y) + 2mn) \norm{T_m^n(y)}_{\tau}.
\]
Next, we estimate the uniform distance between $T^n$ and $T_m^n$. Since $u_j$ is unitary ($\norm{u_j} = 1$), the triangle inequality yields $$\norm{u_{j,m}} \leq \norm{u_j} + \norm{u_{j,m} - u_j} \leq 1 + \frac{1}{(m+1)!}.$$ Applying $T_m$ therefore scales the uniform norm by a factor of at most $\left(1 + \frac{1}{(m+1)!}\right)^2$. Using a standard telescoping sum over $n$ iterations, the cumulative truncation error is bounded by
\begin{align*}
\norm{T^n(y) - T_m^n(y)} &\leq \sum_{k=0}^{n-1} \norm{T_m^k (T - T_m) T^{n-1-k}(y)} \\&\leq C \left(1 + \frac{1}{(m+1)!}\right)^{2n} \frac{n}{(m+1)!} \norm{y}
\end{align*}
for some absolute constant $C > 0$. Setting $m = n$ to balance the bounds, we see that
\begin{align*}
\norm{T^n(y)} &\leq \norm{T_n^n(y)} + \norm{T^n(y) - T_n^n(y)} \\
&\leq \rho(\deg(y) + 2n^2) \norm{T_n^n(y)}_{\tau} + C \left(1 + \frac{1}{(n+1)!}\right)^{2n} \frac{n}{(n+1)!} \norm{y} \\
&\leq \rho(\deg(y) + 2n^2) \left( \norm{T^n(y)}_{\tau} + \norm{T_n^n(y) - T^n(y)}_{\tau} \right) \\&+ C \left(1 + \frac{1}{(n+1)!}\right)^{2n} \frac{n}{(n+1)!} \norm{y}.
\end{align*}
Since the $L^2$-norm is dominated by the uniform norm, we substitute $\norm{T^n(y)}_{\tau} \leq (1-\gamma)^n \norm{y}_{\tau}$ to find
\[
\norm{T^n(y)} \leq \rho(\deg(y) + 2n^2) \left[ (1-\gamma)^n \norm{y}_{\tau} + \text{Error} \right] + \text{Error},
\]
where $\text{Error} = C \left(1 + \frac{1}{(n+1)!}\right)^{2n} \frac{n}{(n+1)!} \norm{y}$.

By hypothesis, $\lim_{n \to \infty} \rho(n^2)^{1/n} = 1$, meaning $\rho$ grows subexponentially. The term $(1-\gamma)^n$ decays exponentially, and the factorial term in the denominator ensures that the Error term decays super-exponentially. Therefore, the right-hand side converges to $0$ as $n \to \infty$, demonstrating that $\lim_{n \to \infty} \norm{T^n(y)} = 0$ for any trace-zero polynomial $y$.

Since polynomials are dense in $A$, for any $y \in A$ and $\varepsilon > 0$, we can choose a trace-zero polynomial $y_0$ such that $\norm{(y - \tau(y)1) - y_0} < \varepsilon/2$. Because $T$ is contractive, $\norm{T^n(y) - \tau(y)1} \leq \norm{T^n(y_0)} + \varepsilon/2$. Choosing $n$ sufficiently large yields $\norm{T^n(y) - \tau(y)1} < \varepsilon$. This proves that $A$ possesses the Dixmier averaging property.
\end{proof}
While the simple choice of $m=n$ suffices to yield the required subexponential decay, a more delicate reworking of the error terms allows for a slightly sharper growth condition. We remark this below.
\begin{remark}
In the proof of Theorem \ref{thm: general Dixmier averaging}, we set $m=n$ for simplicity, which requires the subexponential growth condition $\lim_{n \to \infty} \rho(n^2)^{1/n} = 1$, or equivalently $\log \rho(k) = o(\sqrt{k})$. However, this can be slightly optimized by choosing $m$ to carefully balance the exponential decay from the spectral gap with the factorial decay from the Taylor remainder. 

Let $1-\gamma = e^{-c}$ for some $c > 0$. The proof requires both the spectral gap term and the error term to decay when multiplied by the polynomial growth factor, meaning we need $\rho(2mn)e^{-cn} \to 0$ and $\rho(2mn)\frac{n}{(m+1)!} \to 0$. 

Taking the logarithm of the error term and applying Stirling's approximation, the decay is dominated by $-\log((m+1)!) \sim -m \log m$. To balance the two decay rates, we equate the exponents so that $m \log m \approx c n$, which implies $m \approx \frac{cn}{\log n}$. 

Substituting this choice into the degree $k = 2mn$, we get $k \approx \frac{2c n^2}{\log n}$, which inverts to $n \approx \sqrt{\frac{k \log k}{4c}}$. The necessary condition $\log \rho(k) \ll cn$ then translates to the optimal growth condition
\[
\log \rho(k) = o(\sqrt{k \log k}),
\]
which is a slightly weaker requirement than $o(\sqrt{k})$. Equivalently, this can be restated as $\lim_{m \to \infty} \rho(m^2 \log m)^{\frac{1}{m \log m}} = 1$.    
\end{remark}

\section{Proof of Theorem \ref{thm: Lipschitz conjugate variables and spectral gap} on spectral gap} \label{sec: free probability}

In this section, we prove Theorem \ref{thm: Lipschitz conjugate variables and spectral gap}.  This result is essentially a combination of \cite[Lemma 9, Remark 11]{Dabrowski2010} and \cite[Corollary V.5.6]{MaiThesis} or \cite[Lemma 3.2]{Lee2024conjugate}. Still, for the reader's convenience, we give a streamlined, self-contained argument for the spectral gap inequality after recalling the relevant definitions.

\begin{definition}[{Free difference quotients \cite[\S 3]{VoiculescuFE5}}]
Let $\C \ip{t_1,\dots,t_d}$ be the $*$-algebra of formal noncommutative polynomials in self-adjoint indeterminates $t_1$, \dots, $t_d$.  Note that we view $\C \ip{t_1,\dots,t_d} \otimes \C \ip{t_1,\dots,t_d}$ as a bimodule over $\C \ip{t_1,\dots,t_d} \otimes \C \ip{t_1,\dots,t_d}$ by left multiplication on the left tensorand and right multiplication on the right tensorand.  Voiculescu's \emph{free difference quotient} $\partial_j: \C \ip{t_1,\dots,t_d} \to \C \ip{t_1,\dots,t_d} \otimes \C \ip{t_1,\dots,t_d}$ is the unique linear map satisfying
\begin{equation} \label{eq: derivative of x}
\partial_j t_k = \delta_{j,k} 1 \otimes 1
\end{equation}
and the Leibniz rule (i.e., it is a derivation),
\begin{equation} \label{eq: Leibniz rule}
\partial_j(fg) = f \cdot \partial_j g + \partial_j f \cdot g.
\end{equation}
\end{definition}
With the free difference quotients established, we can formalize a noncommutative integration-by-parts formula with respect to the trace. The vectors in $L^2(M,\tau)$ that implement this formula act as the adjoints to our derivations and are known as conjugate variables.
\begin{definition}[{Conjugate variables \cite[Definition 3.1]{VoiculescuFE5}}]
Let $(M,\tau)$ be a tracial von Neumann algebra generated by self-adjoints $x = (x_1,\dots,x_d)$.  For $f, g \in \C\ip{t_1,\dots,t_d}$, let $f(x)$ denote the evaluation on $x$ and $(f \otimes g)(x) = f(x) \otimes g(x)$.  We say that $\xi = (\xi_1,\dots,\xi_d) \in L^2(M,\tau)_{\sa}^d$ is family of \emph{conjugate variables} for $x$ if for all $f \in \C\ip{t_1,\dots,t_d}$,
\begin{equation} \label{eq: conjugate variable relation}
\ip{\xi_j, f(x)}_\tau = \tau \otimes \tau(\partial f(x)).
\end{equation}
\end{definition}

If $(M,\tau,x)$ admits conjugate variables $\xi$, then $x = (x_1,\dots,x_d)$ are algebraically free \cite{MSW2017}, and therefore the evaluation map $\C\ip{t_1,\dots,t_d} \to M$, $f \mapsto f(x)$ is an isomorphism onto its image which we denote by $\C\ip{x_1,\dots,x_d}$.  We can therefore view $\partial_j$ as a densely defined operator
\[
L^2(M,\tau) \supseteq \C\ip{x_1,\dots,x_d} \to L^2(M,\tau) \otimes L^2(M,\tau),
\]
and the definition of conjugate variables means that
\[
1 \otimes 1 \in \dom(\partial_j^*), \qquad \xi_j = \partial_j^*(1 \otimes 1).
\]
Algebraic computations from \eqref{eq: Leibniz rule} and the relation
\[
\partial_j(f^*) = \operatorname{flip}((\partial_j f)^*),
\]
where $\operatorname{flip}$ is the tensor flip, imply that
\[
\C\ip{x_1,\dots,x_d} \otimes \C\ip{x_1,\dots,x_d} \subseteq \dom(\partial_j^*),
\]
and
\begin{equation} \label{eq: partial star formula}
\partial_j^*(f(x) \otimes g(x)) = f(x) \xi_j g(x)  - (\id \otimes \tau)(\partial_j f(x)) g(x) - f(x)(\tau \otimes \id)(\partial_jg(x)).
\end{equation}
See \cite[Propositions 4.1, 4.3]{VoiculescuFE5}.  Moreover, since the domain of $\partial_j^*$ is dense, this means that $\partial_j$ is closable; we denote the closure by $\overline{\partial}_j$.  Furthermore, we note that if $M^{\op}$ denotes the opposite von Neumann algebra, then $(M \overline{\otimes} M^{\op},\tau \otimes \tau^{\op})$ is a tracial von Neumann algebra and
\[
L^2(M \overline{\otimes} M^{\op},\tau \otimes \tau^{\op}) \cong L^2(M,\tau) \otimes L^2(M,\tau).
\]
For our purposes, the mere existence of conjugate variables is not enough. In addition, we require analytic regularity. Specifically, when these variables fall within the domain of the closures $\overline{\partial}_j$ and yield bounded derivations, they provide the exact structural rigidity needed to force a spectral gap.
\begin{definition}[Lipschitz conjugate variables {\cite[Definition 1]{Dabrowski2014}}]
Let $(M,\tau)$ be a tracial von Neumann algebra generated by self-adjoints $x = (x_1,\dots,x_d)$.  Then $x$ is said to have \emph{Lipschitz conjugate variables} if it has conjugate variables $\xi$ such that $\xi_j$ is in the domain of $\overline{\partial}_k$ and $\overline{\partial}_k \xi_j$ is a element of $M \otimes M^{\operatorname{op}} \subseteq L^2(M \otimes M^{\operatorname{op}}) \cong L^2(M,\tau) \otimes L^2(M,\tau)$, for every $j, k \in \{1,\dots,d\}$.
\end{definition}

We next recall the following result from Dabrowski.  In the statement, we use the notation $[x,h]$ whenever $x \in M$ and $h$ is in a bimodule over $M$ such as $L^2(M,\tau) \otimes L^2(M,\tau)$.  We also use the bilinear map $\#: M \otimes M \times H \to H$ given by the linear extension of $(x \otimes y) \# h = xhy$.

\begin{proposition}[{See \cite[Proof of Lemma 9]{Dabrowski2010}}] \label{prop: Dabrowski identity}
Let $(M,\tau)$ be a tracial von Neumann algebra generated by $x = (x_1,\dots,x_d)$.  Then
\[
2(d-1) \norm{f - \tau(f)}_\tau^2 = \sum_{j=1}^d \ip{[f,1 \otimes 1], \partial_j ([f,x_j])}_{\tau \otimes \tau} - \sum_{j=1}^d \ip{[[f,x_j],1 \otimes 1],\partial_j f}_{\tau \otimes \tau}
\]
\end{proposition}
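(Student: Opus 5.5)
The plan is a direct algebraic verification for $f \in \C\ip{x_1,\dots,x_d}$, which is the setting in which $\partial_j f$ makes sense. Throughout, $M$ acts on $L^2(M)\otimes L^2(M)$ by $a\cdot(b\otimes c)\cdot e = ab\otimes ce$, so that $[f,1\otimes 1] = f\otimes 1 - 1\otimes f$.

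\textbf{Step 1: the diagonal term.} I apply the Leibniz rule \eqref{eq: Leibniz rule} and \eqref{eq: derivative of x} to $[f,x_j] = fx_j - x_jf$. This gives
\[
\partial_j([f,x_j]) = [\partial_j f, x_j] + [f,1\otimes 1],
\]
where $[h,x_j] := h\cdot x_j - x_j\cdot h$. The first sum in the statement therefore contains $\sum_j \norm{f\otimes 1 - 1\otimes f}^2_{\tau\otimes\tau}$. Expanding this norm gives
\[
\norm{f\otimes 1 - 1\otimes f}^2_{\tau\otimes\tau} = 2\norm{f}_\tau^2 - 2|\tau(f)|^2 = 2\norm{f-\tau(f)}_\tau^2,
\]
so this part contributes $2d\norm{f-\tau(f)}_\tau^2$. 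It then remains to show that the leftover terms add up to $-2\norm{f-\tau(f)}_\tau^2$.

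\textbf{Step 2: move everything onto $\partial_j f$.} Set $a = f\otimes 1 - 1\otimes f$. Since $x_j$ is self-adjoint and left/right multiplications on $L^2\otimes L^2$ have adjoints given by multiplication by $x_j^*=x_j$, we have
\[
\ip{a,[\partial_j f,x_j]} = \ip{a\cdot x_j - x_j\cdot a,\ \partial_j f}.
\]
I then subtract $[[f,x_j],1\otimes 1]$ and expand; the terms $x_jf\otimes 1$ and $1\otimes fx_j$ cancel. What is left is
\[
(a\cdot x_j - x_j\cdot a) - [[f,x_j],1\otimes 1] = (f\otimes x_j - fx_j\otimes 1) + (x_j\otimes f - 1\otimes x_jf).
\]
So the whole claim reduces to
\[
\sum_j \ip{f\otimes x_j - fx_j\otimes 1 + x_j\otimes f - 1\otimes x_j f,\ \partial_j f}_{\tau\otimes\tau} = -2\bigl(\norm{f}_\tau^2-|\tau(f)|^2\bigr).
\]

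\textbf{Step 3: telescoping (the main point).} The expression is conjugate-linear in the first slot and linear in $\partial_j f$. I therefore keep the left-hand $f$ general and expand the $f$ inside $\partial_j f$ into monomials $w = y_1\cdots y_n$, with each $y_k\in\{x_1,\dots,x_d\}$. Writing $P_k = y_1\cdots y_k$ and $S_k = y_{k+1}\cdots y_n$, we have
\[
\sum_j \partial_j w = \sum_{k=1}^{n} P_{k-1}\otimes S_k,
\]
where the $k$-th term comes from $\partial_{j}$ with $x_{j}=y_k$.

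Pairing with $f\otimes x_j - fx_j\otimes 1$ gives terms of the form $\tau(f^*P_{k-1})\tau(y_kS_k) - \tau(x_jf^*P_{k-1})\tau(S_k)$. Using traciality, $\tau(x_jf^*P_{k-1}) = \tau(f^*P_{k-1}y_k) = \tau(f^*P_k)$ and $y_kS_k = S_{k-1}$, so these become
\[
\tau(f^*P_{k-1})\tau(S_{k-1}) - \tau(f^*P_k)\tau(S_k).
\]
This telescopes in $k$ to $\tau(f^*)\tau(w) - \tau(f^*w)$. The pair $x_j\otimes f - 1\otimes x_jf$ telescopes in the same way to the same value. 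The total is $2(\tau(f^*)\tau(w) - \tau(f^*w))$, and by linearity in $w$ it equals $-2(\norm{f}_\tau^2 - |\tau(f)|^2)$.

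The main obstacle is bookkeeping: getting the conventions for $[\cdot,\cdot]$ on the bimodule, the order of tensor factors, and the complex conjugations in $\ip{\cdot,\cdot}_{\tau\otimes\tau}$ consistent, so that all signs in Steps 2 and 3 line up. I expect the traciality step $\tau(x_jf^*P_{k-1}) = \tau(f^*P_k)$ to be exactly what makes each pair of terms telescope.
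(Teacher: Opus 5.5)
Your proof is correct. The signs and conjugations all check out: the adjoint move uses $x_j=x_j^*$ and traciality, the cancellation leaves $(f\otimes x_j - fx_j\otimes 1)+(x_j\otimes f - 1\otimes x_jf)$, and each of the two telescoping sums gives $\tau(f^*)\tau(w)-\tau(f^*w)$.

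Your Step 1 is the paper's first step. The second half is organized differently.
\begin{itemize}
\item The paper writes $\ip{[f,1\otimes 1],[\partial_j f,x_j]}_{\tau\otimes\tau}=\tau\otimes\tau([f^*,[\partial_j f,x_j]])$ and splits it with the Jacobi identity into $\ip{[[f,x_j],1\otimes1],\partial_j f}_{\tau\otimes\tau}$ plus $\tau\otimes\tau([[f^*,\partial_j f],x_j])$. It evaluates the second piece using traciality and the identity $\sum_j \partial_j f\#[x_j,1\otimes 1]=[f,1\otimes 1]$, which it justifies abstractly: both sides are derivations in $f$ that agree on generators.
\item You skip the Jacobi identity. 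You move the multiplication by $x_j$ onto the left slot, cancel directly against $[[f,x_j],1\otimes1]$, and check the remainder by hand.
\end{itemize}

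Underneath, the two proofs use the same mechanism. Your reduced quantity equals $-\ip{[f,1\otimes 1],\sum_j\partial_j f\#[x_j,1\otimes 1]}_{\tau\otimes\tau}$. Your first pair is minus the pairing against $f\otimes 1$, and your second pair is the pairing against $1\otimes f$. Your telescoping $\sum_k (P_k\otimes S_k-P_{k-1}\otimes S_{k-1})=w\otimes 1-1\otimes w$ is exactly the monomial-level proof of that identity.

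Each version has its advantages:
\begin{itemize}
\item Yours is fully elementary and self-contained, and it makes every sign explicit. It also shows that the reduced identity holds with the two copies of $f$ decoupled: replacing the left-hand $f$ by an arbitrary $g$ gives $2(\tau(g^*)\tau(f)-\tau(g^*f))$.
\item The paper's version is coordinate-free. It needs no monomial expansion and isolates the single structural identity that drives the computation.
\end{itemize}
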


\begin{proof}
First, we note that $\norm{[f,1\otimes 1]}_{\tau \otimes \tau}^2 = \norm{f \otimes 1 - 1 \otimes f}_{\tau \otimes \tau}^2 = 2 \norm{f - \tau(f)}_\tau^2$, and so the left-hand side is $(d-1) \norm{[f,1\otimes 1]}_{\tau \otimes \tau}^2$.  From the Leibniz rule,
\[
\partial_j [f,x_j] = [\partial_j f, x_j] + [f,1 \otimes 1],
\]
so that
\begin{equation} \label{eq: number 1}
\norm{[f, 1 \otimes 1]}_{\tau \otimes \tau}^2 = \ip{[f,1\otimes 1], \partial_j [f,x_j]}_{\tau \otimes \tau} - \ip{[f,1 \otimes 1], [\partial_j f, x_j]}_{\tau \otimes \tau}.
\end{equation}
Focusing on the last term, we use the Jacobi identity to write
\begin{align} 
    \ip{[f,1 \otimes 1], [\partial_j f, x_j]}_{\tau \otimes \tau} &= \tau \otimes \tau ([f^*,[\partial_j f, x_j]]) \label{eq: number 2} \\
    &= \tau \otimes \tau([[x_j,f^*],\partial_j f]) + \tau \otimes \tau([[f^*,\partial_j f],x_j]). \label{eq: number 3}
\end{align}
Focusing on the last term again, we use the traciality of $\tau$ to show that
\begin{align}
\tau \otimes \tau &([[f^*,\partial_j f],x_j]) \label{eq: number 4} \\
&= \tau \otimes \tau\Bigl(f^* (\partial_j f) x_j - x_j f^* (\partial_j f) - (\partial_j f) f^* x_j + x_j (\partial_j f) f^* \Bigr) \label{eq: number 5} \\
&= -\tau \otimes \tau( [f^*, \partial_j f \# [x_j, 1 \otimes 1]). \label{eq: number 6}
\end{align}
More explicitly, for instance, the first term $\tau \otimes \tau(f^* (\partial_j f) x_j)$ in \eqref{eq: number 4} is equal to $\tau \otimes \tau(f^* \partial_j f \# (1 \otimes x_j))$, using the identity
\[
\tau \otimes \tau( f^*(a \otimes b) x_j) = \tau \otimes \tau(f^*a \otimes x_j b) = \tau \otimes \tau(f^* (a \otimes b) \# (1 \otimes x_j)),
\]
which is then extended linearly from the simple tensors $a \otimes b$, and the other terms in \eqref{eq: number 2} are handled similarly.  This allows us to apply the well-known identity \cite[eq. (8.3)]{MS2017}
\begin{equation} \label{eq: number 7}
\sum_{j=1}^d \partial_j f \# [x_j, 1 \otimes 1] = [f, 1 \otimes 1],
\end{equation}
which can be verified by checking it when $f(x) = x_k$ and noting that both sides are derivations in $f$.  We thus have from \eqref{eq: number 4} that
\begin{align*}
\sum_{j=1}^d \tau \otimes \tau([[f^*,\partial_j f],x_j]) &= -\sum_{j=1}^{d} \tau \otimes \tau([f^*, \partial_j f \# [x_j, 1 \otimes 1])) \\
&= - \tau \otimes \tau([f^*, [f, 1 \otimes 1]]) \\
&= - \norm{[f,1\otimes 1]}_{\tau \otimes \tau}^2.
\end{align*}
Combining this with \eqref{eq: number 2},
\begin{align*}
\sum_{j=1}^d \ip{[f,1 \otimes 1], [\partial_j f, x_j]}_{\tau \otimes \tau} &= \sum_{j=1}^d \tau \otimes \tau\Bigl([[x_j,f^*],\partial_j f] + [[f^*,\partial_j f],x_j] \Bigr) \\
&= \sum_{j=1}^d \ip{ [[f,x_j], 1 \otimes 1], \partial_j f}_{\tau \otimes \tau} - \norm{[f,1\otimes 1]}_{\tau \otimes \tau}^2.
\end{align*}
We then substitute this into \eqref{eq: number 1} and obtain
\begin{multline*}
d \norm{[f, 1 \otimes 1]}_{\tau \otimes \tau}^2 \\
= \sum_{j=1}^d \ip{[f,1 \otimes 1], \partial_j ([f,x_j])}_{\tau \otimes \tau} - \sum_{j=1}^d \ip{[[f,x_j],1 \otimes 1],\partial_j f}_{\tau \otimes \tau} + \norm{[f,1 \otimes 1]}_{\tau \otimes \tau}^2,
\end{multline*}
and subtracting $\norm{[f,1 \otimes 1]}_{\tau \otimes \tau}^2$ yields the desired formula.
\end{proof}
Both terms on the right-hand side of Proposition~\ref{prop: Dabrowski identity} are pairings
of the form $\ip{[g,1\otimes 1], \partial_j h}_{\tau \otimes \tau}$, with $g$ and $h$ taken from
$\{f, [f,x_j]\}$. To bound such a pairing, it suffices to control the adjoint $\partial_j^*$ on
elements of the form $g \otimes 1$, since $\ip{g \otimes 1, \partial_j h}_{\tau \otimes \tau}
= \ip{\partial_j^*(g \otimes 1), h}_\tau$. The following identity of Mai supplies exactly this
control once the conjugate variables are Lipschitz.
\begin{proposition}[{See \cite[Corollary V.5.6]{MaiThesis} or \cite[Lemma 3.2]{Lee2024conjugate}}] \label{prop: Mai identity}
Let $(M,\tau)$ be a tracial von Neumann algebra generated by self-adjoints $x = (x_1,\dots,x_d)$, and assume that $x$ has conjugate variables $\xi$.  Then for $f, g \in \C\ip{x_1,\dots,x_d}$, we have
\begin{equation} \label{eq: norm of partial star}
\norm{\partial_j^*(f \otimes 1)}_\tau^2 = \ip{\partial_j^*(f^*f \otimes 1), \partial_j^*(1 \otimes 1)}_\tau.
\end{equation}
In particular, if the conjugate variables are Lipschitz, then
\begin{equation} \label{eq: norm of partial star 2}
\norm{\partial_j^*(f \otimes 1)}_\tau \leq \norm{ \overline{\partial}_j \xi_j}_{M \overline{\otimes} M^{\op}}^{1/2} \norm{f}_\tau.
\end{equation}
\end{proposition}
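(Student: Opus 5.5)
The plan is to prove the identity \eqref{eq: norm of partial star} by direct expansion, using formula \eqref{eq: partial star formula} on both sides and the defining relation \eqref{eq: conjugate variable relation} to remove every occurrence of $\xi_j$ that is not paired with itself. Inequality \eqref{eq: norm of partial star 2} then follows from a duality argument.

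\textbf{Step 1: expand both sides.} Write $\eta = (\id \otimes \tau)(\partial_j f)$, which is a polynomial in $x$. By \eqref{eq: partial star formula} with $g = 1$, we have $\partial_j^*(f \otimes 1) = f\xi_j - \eta$. Hence
\[
\norm{\partial_j^*(f \otimes 1)}_\tau^2 = \tau(\xi_j f^*f \xi_j) - 2\re \tau(\xi_j f^* \eta) + \tau(\eta^*\eta).
\]
Likewise $\partial_j^*(f^*f \otimes 1) = f^*f\xi_j - (\id\otimes\tau)(\partial_j(f^*f))$. Since $\xi_j$ is self-adjoint, this gives
\[
\ip{\partial_j^*(f^*f \otimes 1), \xi_j}_\tau = \tau(\xi_j f^*f\xi_j) - \tau\bigl(\xi_j\, (\id\otimes\tau)(\partial_j(f^*f))^*\bigr).
\]
The terms $\tau(\xi_j f^*f\xi_j)$ cancel. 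What remains to show is
\[
\tau\bigl(\xi_j\,(\id\otimes\tau)(\partial_j(f^*f))^*\bigr) = 2\re \tau(\xi_j f^*\eta) - \tau(\eta^*\eta).
\]

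\textbf{Step 2: eliminate $\xi_j$.} Both sides of the reduced identity pair $\xi_j$ linearly against polynomials. Each such pairing is converted by \eqref{eq: conjugate variable relation} into $(\tau\otimes\tau)(\partial_j p)$ for an explicit polynomial $p$. Next, apply the Leibniz rule to $\partial_j(f^*f) = f^*\cdot\partial_j f + \partial_j(f^*)\cdot f$, together with $\partial_j(f^*) = \operatorname{flip}((\partial_j f)^*)$. The left side then splits into a term involving $f^*\eta$, which matches part of $2\re\tau(\xi_j f^*\eta)$, and a term involving $(\id\otimes\tau)(\partial_j(f^*)\, f)$.

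After this the identity involves no $\xi_j$ at all; it is a statement about iterated free difference quotients and the trace. I expect to organize it through the coassociativity relation $(\partial_j \otimes \id)\partial_j = (\id \otimes \partial_j)\partial_j$. This lets me write $(\tau\otimes\tau)\bigl(\partial_j (\id\otimes\tau)\partial_j h\bigr) = (\tau\otimes\tau\otimes\tau)\bigl((\partial_j\otimes\id)\partial_j h\bigr)$.

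By sesquilinearity in $f$, it suffices to verify the resulting trace identity when $f$ is a monomial. Both sides then become sums over pairs of positions of $x_j$ in the word, weighted by traces of the pieces between them, and the plan is to match these sums term by term.

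This bookkeeping is the step I expect to be the main obstacle. If it becomes unwieldy, I would instead approximate. In the Lipschitz case one can write
\[
\norm{\partial_j^*(f\otimes 1)}^2 = \ip{\partial_j^*(f\otimes1),\, f\xi_j - \eta}
\]
and move $\partial_j^*$ across, using that $f\xi_j \in \dom(\overline{\partial}_j)$ with $\overline{\partial}_j(f\xi_j) = \partial_j f\cdot \xi_j + f\cdot\overline{\partial}_j\xi_j$. This is the actual content that is needed downstream.

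\textbf{Step 3: the bound \eqref{eq: norm of partial star 2}.} Since $f^*f\otimes 1 \in \dom(\partial_j^*)$ and $\xi_j \in \dom(\overline{\partial}_j)$, the adjoint relation moves to the closure and gives
\[
\ip{\partial_j^*(f^*f\otimes1),\xi_j}_\tau = \ip{f^*f\otimes 1, \overline{\partial}_j\xi_j}_{\tau\otimes\tau} = (\tau\otimes\tau^{\op})\bigl((f^*f\otimes 1)\,X\bigr),
\]
up to conjugation, where $X = (\overline{\partial}_j \xi_j)^*$. Because $f^*f\otimes1 \ge 0$ in $M\overline{\otimes}M^{\op}$, we have $|(\tau\otimes\tau^{\op})(aX)| \le \norm{X}\,(\tau\otimes\tau^{\op})(a)$ for positive $a$. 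Here $(\tau\otimes\tau^{\op})(f^*f\otimes 1) = \norm{f}_\tau^2$.

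Combining this with \eqref{eq: norm of partial star} and taking square roots yields \eqref{eq: norm of partial star 2}.
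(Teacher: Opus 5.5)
\textbf{Verdict: genuine gap.} Your reduction and your Step 3 are right, but the one nontrivial step, the $\xi_j$-free trace identity, is only promised, not proved.

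\textbf{What is right.} Steps 1 and 3 are correct and lead to the same reduction as the paper. Split $(\id\otimes\tau)(\partial_j(f^*f)) = f^*\eta + q$ with $q = (\id\otimes\tau)(\partial_j(f^*)f)$. Then your reduced identity is equivalent to $\langle q,\xi_j\rangle_\tau = \langle \partial_j^*(f\otimes 1),\eta\rangle_\tau$, which is exactly where the paper's proof arrives. Your duality bound in Step 3 is the paper's.

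\textbf{The gap.} This $\xi_j$-free identity is the entire nontrivial content of Mai's formula, and you leave it as an expectation. The formula you display, $(\tau\otimes\tau)(\partial_j(\id\otimes\tau)\partial_j h)=\tau^{\otimes 3}((\partial_j\otimes\id)\partial_j h)$, is only the trivial compatibility of $\partial_j$ with $\id\otimes\tau$. It does not use coassociativity, and you never say where the other iterate $(\id\otimes\partial_j)\partial_j f$ comes from.

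\textbf{How it closes.} It takes a few lines. Write $\partial_j f=\sum_i a_i\otimes b_i$, so $\eta=\sum_i\tau(b_i)a_i$. By the flip identity, $\partial_j(f^*)=\sum_i b_i^*\otimes a_i^*$, hence $q^*=\sum_i\tau(f^*a_i)\,b_i$.
\begin{itemize}
\item The conjugate relation gives $\tau(\xi_j q^*)=\sum_i\tau(f^*a_i)(\tau\otimes\tau)(\partial_j b_i)=\tau^{\otimes 3}\bigl((f^*\otimes 1\otimes 1)\,(\id\otimes\partial_j)\partial_j f\bigr)$.
\item Also $\tau(\xi_j f^*\eta)=(\tau\otimes\tau)(\partial_j(f^*)\cdot\eta)+(\tau\otimes\tau)(f^*\cdot\partial_j\eta)$.
\item The first of these terms is $\sum_i\overline{\tau(b_i)}\,\tau(a_i^*\eta)=\tau(\eta^*\eta)$.
\item The second is $\tau^{\otimes 3}\bigl((f^*\otimes 1\otimes 1)\,(\partial_j\otimes\id)\partial_j f\bigr)$.
\end{itemize}
So the $\tau(\eta^*\eta)$ terms cancel, and what remains is precisely coassociativity. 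No monomial bookkeeping is needed.

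\textbf{The monomial reduction.} ``By sesquilinearity it suffices to take $f$ a monomial'' is not valid as stated. A sesquilinear form is not determined by its diagonal values at basis vectors, so you would need the polarized identity for all pairs of monomials.

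\textbf{The fallback.} It does not bypass the obstacle. After moving $\partial_j^*$ across you are left with $\langle f\otimes 1,\partial_j f\cdot\xi_j\rangle-\langle f\otimes 1,\partial_j\eta\rangle$, which you do not address. It vanishes by the same coassociativity computation, using $\tau(b_i\xi_j)=(\tau\otimes\tau)(\partial_j b_i)$. That route is then slightly leaner, since it avoids the flip identity. However, it only yields \eqref{eq: norm of partial star 2} under the Lipschitz hypothesis. It does not give the identity \eqref{eq: norm of partial star} for general conjugate variables, which is part of the statement.
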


\begin{proof}
We first note the following consequence of \eqref{eq: partial star formula}:
\[
\partial_j^* (f^* f \otimes 1) = f^* \partial_j^*(f \otimes 1) - (\id \otimes \tau)(\partial_j(f^*) f). 
\]
We therefore have
\begin{equation} \label{eq: derivative formula 1}
\ip{\partial_j^*(f^*f \otimes 1), \partial_j^*(1 \otimes 1)}_\tau = \ip{f^* \partial_j^*(f \otimes 1), \partial_j^*(1 \otimes 1)}_\tau - \ip{(\id \otimes \tau)(\partial_j(f^*) f), \partial_j^*(1 \otimes 1)}_\tau,
\end{equation}
and meanwhile
\begin{equation} \label{eq: derivative formula 2}
\ip{\partial_j^*(f \otimes 1), \partial_j^*(f \otimes 1)}_\tau = \ip{\partial_j^*(f \otimes 1), f \partial_j^*(1 \otimes 1)}_\tau - \ip{\partial_j^*(f \otimes 1), (\id \otimes \tau)(\partial_j f)}_\tau.
\end{equation}
We note that the first term on the right-hand side of \eqref{eq: derivative formula 1} is equal to the corresponding term in \eqref{eq: derivative formula 2}.  Therefore, to prove \eqref{eq: norm of partial star}, it suffices to show that
\[
\ip{(\id \otimes \tau)(\partial_j(f^*) f), \partial_j^*(1 \otimes 1)}_\tau = \ip{\partial_j^*(f \otimes 1), (\id \otimes \tau)(\partial_j f)}_\tau,
\]
which is equivalent to
\[
\ip{\partial_j (\id \otimes \tau)(\partial_j(f^*) f), 1 \otimes 1}_{\tau \otimes \tau} = \ip{f \otimes 1, \partial_j(\id \otimes \tau)(\partial_j f)}_{\tau \otimes \tau},
\]
and in turn
\[
\tau \otimes \tau(\partial_j(\id \otimes \tau)(\partial_j(f^*)f)^*) = \tau \otimes \tau(f^* \partial_j( \id \otimes \tau)(\partial_j f)).
\]
To prove this, one simply uses the identity $\partial_j(f^*) = \operatorname{flip}((\partial_j f)^*)$ and the coassociativity relation $(\id \otimes \partial_j) \circ \partial_j = (\partial_j \otimes \id) \circ \partial_j$ (see e.g.\ \cite[eq. (8.2)]{MS2017}).  Indeed,
\begin{align*}
\tau \otimes \tau((\partial_j(\id \otimes \tau)(\partial_j(f^*)f))^*) &= \tau \otimes \tau((\partial_j(\id \otimes \tau)(\partial_j(f^*)f)^*)) \\
&= \tau \otimes \tau(\partial_j(\tau \otimes \id)(f^* \partial_j f)) \\
&= \tau \otimes \tau \otimes \tau(f^* (\id \otimes \partial_j) \circ \partial_j f) \\
&= \tau \otimes \tau \otimes \tau(f^* (\partial_j \otimes \id) \circ \partial_j f) \\
&= \tau \otimes \tau(f^* \partial_j(\id \otimes \tau)(\partial_j f)).
\end{align*}
This completes the proof of \eqref{eq: norm of partial star}.

As a consequence of \eqref{eq: norm of partial star}, we have
\[
\norm{\partial_j^*(f \otimes 1)}_\tau^2 = \ip{f^* f \otimes 1, \partial_j \partial_j^*(1 \otimes 1)}_{\tau \otimes \tau} = \ip{f^*f \otimes 1, \overline{\partial}_j \xi_j}_{\tau \otimes \tau}.
\]
Moreover,
\begin{align*}
\ip{f^*f \otimes 1, \overline{\partial}_j \xi_j}_{\tau \otimes \tau} &= \tau \otimes \tau^{\op}((f^*f \otimes 1)(\partial_j \xi_j)) \\
&\leq \norm{\overline{\partial}_j \xi_j}_{M \overline{\otimes} M^{\op}} (\tau \otimes \tau^{\op})(f^*f \otimes 1) \\
&= \norm{\overline{\partial}_j \xi_j}_{M \overline{\otimes} M^{\op}} \norm{f}_\tau^2,
\end{align*}
which proves \eqref{eq: norm of partial star 2}.  
\end{proof}

\begin{remark}
In \eqref{eq: norm of partial star 2}, one can also replace $\norm{\partial_j \xi_j}_{M \overline{\otimes} M^{\op}}$ with $\norm{(\id \otimes \tau)(\partial_j \xi_j)}_M$ since
\[
\ip{f^*f \otimes 1, \overline{\partial}_j \xi_j}_{\tau \otimes \tau} = \ip{f^*f, (\id \otimes \tau)(\overline{\partial}_j \xi_j)}_\tau.
\]
Hence, Corollary \ref{cor: main derivative estimate} and Theorem \ref{thm: Lipschitz conjugate variables and spectral gap 2} also hold with $\norm{(\id \otimes \tau)(\overline{\partial}_j \xi_j)}_M$ in place of $\norm{\overline{\partial}_j \xi_j}_{M \overline{\otimes} M^{\op}}$.
\end{remark}

With Proposition \ref{prop: Mai identity} in hand, we now have the required tools to bound the cross-terms arising in Dabrowski's identity. The following corollary translates the $L^2$-norm estimates of the Lipschitz conjugate variables into bounds for these pairings.
\begin{corollary} \label{cor: main derivative estimate}
Let $(M,\tau)$ be a tracial von Neumann algebra generated by self-adjoints $x = (x_1,\dots,x_d)$, and assume that $x$ has conjugate variables $\xi$.  Then for $f, g \in \C\ip{x_1,\dots,x_d}$, we have
\[
|\ip{[f, 1 \otimes 1], \partial_j g}_{\tau \otimes \tau}| \leq 2 \norm{\overline{\partial}_j \xi_j}_{M \overline{\otimes} M^{\op}}^{1/2} \norm{f}_\tau \norm{g}_\tau.
\]    
\end{corollary}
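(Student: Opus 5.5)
Since $[f,1\otimes 1] = f\otimes 1 - 1\otimes f$, I will split the pairing into two terms:
\[
\ip{[f,1\otimes 1],\partial_j g}_{\tau\otimes\tau} = \ip{f\otimes 1, \partial_j g}_{\tau\otimes\tau} - \ip{1\otimes f, \partial_j g}_{\tau\otimes\tau}.
\]
I will show each is bounded by $\norm{\overline{\partial}_j\xi_j}^{1/2}\norm{f}_\tau\norm{g}_\tau$; the factor $2$ then comes from the triangle inequality. This assumes, as the statement implicitly does, that the conjugate variables are Lipschitz, so that the norm $\norm{\overline{\partial}_j \xi_j}_{M\overline{\otimes}M^{\op}}$ is finite.

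For the first term, $f\otimes 1 \in \C\ip{x}\otimes\C\ip{x}\subseteq \dom(\partial_j^*)$. Hence
\[
\ip{f\otimes 1,\partial_j g}_{\tau \otimes \tau}=\ip{\partial_j^*(f\otimes 1), g}_\tau .
\]
Cauchy--Schwarz together with \eqref{eq: norm of partial star 2} of Proposition \ref{prop: Mai identity} then gives the bound $\norm{\overline{\partial}_j\xi_j}^{1/2}\norm{f}_\tau\norm{g}_\tau$.

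The second term $\ip{1\otimes f,\partial_j g}$ is the step needing an extra idea, since Mai's identity was stated only for $f\otimes 1$. I will use the flip symmetry $\partial_j(g^*) = \operatorname{flip}((\partial_j g)^*)$, and the fact that $\operatorname{flip}$ and the adjoint $h\mapsto h^*$ on $L^2(M)\otimes L^2(M)$ (i.e.\ $a\otimes b\mapsto a^*\otimes b^*$) preserve $\tau\otimes\tau$, the latter being antiunitary. Using these,
\[
\ip{1\otimes f,\partial_j g} = \overline{\ip{1\otimes f^*, (\partial_j g)^*}} = \overline{\ip{f^*\otimes 1, \operatorname{flip}((\partial_j g)^*)}} = \overline{\ip{f^*\otimes 1,\partial_j(g^*)}}.
\]
This reduces to the first case with $f^*, g^*$ in place of $f, g$, and $\norm{f^*}_\tau=\norm{f}_\tau$, $\norm{g^*}_\tau=\norm{g}_\tau$ by traciality. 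The main care needed is the bookkeeping of conjugate-linearity in these identities: I will check them on simple tensors $\partial_j g = a\otimes b$, where $\ip{1\otimes f, a\otimes b}=\tau(a)\tau(f^*b)$ and $\ip{f^*\otimes 1, b^*\otimes a^*} = \tau(f b^*)\tau(a^*)$, whose complex conjugate matches. The result then extends by linearity.

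Combining the two bounds yields the corollary.
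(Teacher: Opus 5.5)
Your proposal is correct and follows the paper's proof essentially verbatim. You split $[f,1\otimes 1]=f\otimes 1-1\otimes f$, bound the first pairing via $\partial_j^*$, Cauchy--Schwarz and \eqref{eq: norm of partial star 2}, and reduce the second to the first through $\partial_j(g^*)=\operatorname{flip}((\partial_j g)^*)$ with $f^*,g^*$ in place of $f,g$. Your simple-tensor check of the conjugate-linearity and your remark that the Lipschitz hypothesis is implicitly needed are details the paper leaves tacit.
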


\begin{proof}
Write $[f, 1 \otimes 1] = f \otimes 1 - 1 \otimes f$.  Then
\[
\ip{f \otimes 1, \partial_j g}_{\tau \otimes \tau} = \ip{\partial_j^*(f \otimes 1), g}_\tau,
\]
and then we apply the Cauchy--Schwarz inequality and the bound \eqref{eq: norm of partial star 2} for $\partial_j^*(f \otimes 1)$.  For the other inequality, we apply the tensor flip identity to write
\[
\ip{1 \otimes f, \partial_j g}_{\tau \otimes \tau} = \ip{\partial_j (g^*), f^* \otimes 1},
\]
and then apply the previous bound together with $\norm{f^*}_\tau = \norm{f}_\tau$ and $\norm{g^*}_\tau = \norm{g}_\tau$.
\end{proof}

We now conclude the proof of the following more precise version of Theorem \ref{thm: Lipschitz conjugate variables and spectral gap}.

\begin{theorem} \label{thm: Lipschitz conjugate variables and spectral gap 2}
Let $(M,\tau)$ be a tracial von Neumann algebra generated by self-adjoints $x = (x_1,\dots,x_d)$ with $d \geq 2$, and assume that $x$ has Lipschitz conjugate variables $\xi$.  Then for $z \in M$,
\begin{align}
\norm{z - \tau(z)}_\tau &\leq \frac{2}{d-1} \sum_{j=1}^d \norm{\overline{\partial}_j \xi_j}_{M \overline{\otimes} M^{\op}}^{1/2} \norm{[z,x_j]}_\tau \label{eq: free spectral gap estimate} \\
&\leq \frac{2}{d-1} \left( \sum_{j=1}^d \norm{\overline{\partial}_j \xi_j}_{M \overline{\otimes} M^{\op}} \right)^{1/2} \left( \sum_{j=1}^d \norm{[z,x_j]}_\tau^2 \right)^{1/2} \label{eq: free spectral gap estimate 2}
\end{align}
\end{theorem}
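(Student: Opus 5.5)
First prove the inequality for $z = f \in \C\ip{x_1,\dots,x_d}$, then extend to all $z \in M$ by density. For polynomial $f$, start from the identity in Proposition~\ref{prop: Dabrowski identity}:
\[
2(d-1)\norm{f-\tau(f)}_\tau^2 = \sum_{j=1}^d \ip{[f,1\otimes 1], \partial_j([f,x_j])}_{\tau\otimes\tau} - \sum_{j=1}^d \ip{[[f,x_j],1\otimes 1], \partial_j f}_{\tau\otimes\tau}.
\]
Each summand has the form $\ip{[g,1\otimes 1],\partial_j h}_{\tau\otimes\tau}$ with $g,h$ polynomials. Corollary~\ref{cor: main derivative estimate} bounds it by $2\norm{\overline{\partial}_j\xi_j}^{1/2}\norm{g}_\tau\norm{h}_\tau$.

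In the first sum we take $g=f$ and $h=[f,x_j]$; in the second, $g=[f,x_j]$ and $h=f$. Both give a bound of $2 c_j^{1/2}\norm{f}_\tau\norm{[f,x_j]}_\tau$, where $c_j = \norm{\overline{\partial}_j\xi_j}_{M\overline{\otimes}M^{\op}}$. This yields
\[
2(d-1)\norm{f-\tau(f)}_\tau^2 \le 4\sum_j c_j^{1/2}\norm{f}_\tau\norm{[f,x_j]}_\tau .
\]
To cancel one factor, note that replacing $f$ by $f-\tau(f)$ changes neither $[f,x_j]$ nor $[f,1\otimes1]$. Since both sides of the identity depend on $f$ only through these commutators, we may assume $\tau(f)=0$, so $\norm{f}_\tau=\norm{f-\tau(f)}_\tau$. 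Dividing by $2(d-1)\norm{f-\tau(f)}_\tau$ (the case where it vanishes is trivial) gives \eqref{eq: free spectral gap estimate} for polynomials. The second inequality \eqref{eq: free spectral gap estimate 2} then follows from Cauchy--Schwarz in $\R^d$.

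The main obstacle is extending to arbitrary $z\in M$, because we need simultaneous approximation of $z$ in $\norm{\cdot}_\tau$ and of $[z,x_j]$ in $\norm{\cdot}_\tau$. By Kaplansky density, pick polynomials $f_n$ with $\norm{f_n}\le\norm{z}$ and $f_n\to z$ strongly, hence in $\norm{\cdot}_\tau$. Then
\[
\norm{[f_n-z,x_j]}_\tau \le \norm{(f_n-z)x_j}_\tau + \norm{x_j(f_n-z)}_\tau .
\]
The second term is at most $\norm{x_j}\norm{f_n-z}_\tau\to0$. For the first, use traciality: $\norm{yx_j}_\tau=\norm{x_j^*y^*}_\tau\le\norm{x_j}\norm{y^*}_\tau=\norm{x_j}\norm{y}_\tau$. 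So $[f_n,x_j]\to[z,x_j]$ in $L^2$, and since both sides of \eqref{eq: free spectral gap estimate} are continuous in $\norm{\cdot}_\tau$, the inequality passes to the limit. This density step uses only that $x_j$ is bounded and $\tau$ is tracial, so I expect it to go through without difficulty; the real content lies in Proposition~\ref{prop: Dabrowski identity} combined with Corollary~\ref{cor: main derivative estimate}.
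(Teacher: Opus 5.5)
Your proposal is correct and matches the paper's proof: reduce to a centered polynomial, apply Dabrowski's identity, bound both pairings with the corollary's estimate $2\norm{\overline{\partial}_j\xi_j}^{1/2}\norm{g}_\tau\norm{h}_\tau$, divide by $2(d-1)\norm{f}_\tau$, and finish with Cauchy--Schwarz. Your explicit density step, using $\norm{yx_j}_\tau\le\norm{x_j}\norm{y}_\tau$ by traciality, correctly fills in what the paper leaves as ``it suffices to prove the claim for polynomials''; the right-hand side of the identity also involves $\partial_j f$, but this too is unchanged under $f\mapsto f-\tau(f)$ because $\partial_j 1=0$.
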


\begin{proof}
It suffices to prove the claim for $z = f \in \C \ip{x_1,\dots,x_d}$ and we can assume without loss of generality that $\tau(f) = 0$.  From Proposition \ref{prop: Dabrowski identity}, we obtain
\[
2(d-1) \norm{f}_\tau^2 \leq \sum_{j=1}^d |\ip{[f,1 \otimes 1], \partial_j ([f,x_j])}_{\tau \otimes \tau}| + \sum_{j=1}^d |\ip{[[f,x_j],1 \otimes 1],\partial_j f}_{\tau \otimes \tau}|.
\]
Bounding each of the terms with Corollary \ref{cor: main derivative estimate}, we see that
\[
2(d-1) \norm{f}_\tau^2 \leq 4 \sum_{j=1}^d \norm{\overline{\partial}_j \xi_j}_{M \overline{\otimes} M^{\op}}^{1/2} \norm{[f,x_j]}_\tau \norm{f}_\tau,
\]
which yields \eqref{eq: free spectral gap estimate}, and then \eqref{eq: free spectral gap estimate 2} follows by the Cauchy--Schwarz inequality.
\end{proof}

\bibliography{name}

@article{archbold2017dixmier,
  title={The {D}ixmier property and tracial states for {$\mathrm{C}^*$}-algebras},
  author={Archbold, Robert and Robert, Leonel and Tikuisis, Aaron},
  journal={Journal of Functional Analysis},
  volume={273},
  number={8},
  pages={2655--2718},
  year={2017},
  publisher={Elsevier}
}

@article{Avsec2011,
  author        = {Avsec, Stephen},
  title         = {Strong Solidity of the $q$-{G}aussian Algebras for all $-1 < q < 1$},
  year          = {2011},
  journal        = {arXiv:1110.4918},
  }

@article{BorstCaspersKlisseWasilewski2023,
 author = {Borst, Matthijs and Caspers, Martijn and Klisse, Mario and Wasilewski, Mateusz},
 title = {On the isomorphism class of {{\(q\)}}-{Gaussian} {{\(\mathrm{C}^\ast\)}}-algebras for infinite variables},
 fjournal = {Proceedings of the American Mathematical Society},
 journal = {Proc. Am. Math. Soc.},
 issn = {0002-9939},
 volume = {151},
 number = {2},
 pages = {737--744},
 year = {2023},
 language = {English},
 doi = {10.1090/proc/16165},
 zbMATH = {7630923},
 Zbl = {1511.46038}
}

@article{Bozejko1999ultracontractivity,
    author = {Marek Bo{\.z}ejko},
    title = {Ultracontractivity and strong Sobolev inequality for {$q$-Ornstein--Uhlenbeck} semigroup (-1 < q < 1)},
    journal = {Infinite Dimensional Analysis, Quantum Probability and Related Topics},
    volume = {2},
    number = {2},
    pages = {203-220},
    year = {1999},
    doi = {10.1142/S0219025799000114}
}

@article{BozejkoSpeicher1994,
 author = {Bo{\.z}ejko, Marek and Speicher, Roland},
 title = {An example of a generalized {Brownian} motion},
 fjournal = {Communications in Mathematical Physics},
 journal = {Commun. Math. Phys.},
 issn = {0010-3616},
 volume = {137},
 number = {3},
 pages = {519--531},
 year = {1991},
 language = {English},
 doi = {10.1007/BF02100275},
 zbMATH = {4190820},
 Zbl = {0722.60033}
}

@article{BozejkoKummererSpeicher1997,
 author = {Bo{\.z}ejko, Marek and K{\"u}mmerer, Burkhard and Speicher, Roland},
 title = {{{\(q\)}}-Gaussian processes: {Non}-commutative and classical aspects},
 fjournal = {Communications in Mathematical Physics},
 journal = {Commun. Math. Phys.},
 issn = {0010-3616},
 volume = {185},
 number = {1},
 pages = {129--154},
 year = {1997},
 language = {English},
 doi = {10.1007/s002200050084},
 zbMATH = {1019451},
 Zbl = {0873.60087}
}

@article{Caspers2023,
 author = {Caspers, Martijn},
 title = {On the isomorphism class of {{\(q\)}}-{Gaussian} {{\(W^\ast\)}}-algebras for infinite variables},
 fjournal = {Comptes Rendus. Math{\'e}matique. Acad{\'e}mie des Sciences, Paris},
 journal = {C. R., Math., Acad. Sci. Paris},
 issn = {1631-073X},
 volume = {361},
 pages = {1711--1716},
 year = {2023},
 language = {English},
 doi = {10.5802/crmath.489},
 zbMATH = {7811833},
 Zbl = {1545.46044}
}

@article{Dabrowski2010,
    author = {Yoann Dabrowski},
    title = {A note about proving non-$\Gamma$ under a finite
non-microstates free Fisher information assumption},
    journal = {Journal of Functional Analysis},
    volume = {258},
    number = {11},
    pages = {3662-3674},
    year = {2010},
    doi = {10.1016/j.jfa.2010.02.010}
}

@article{Dabrowski2014,
    author = {Yoann Dabrowski},
    title = {A free stochastic partial differential equation},
    journal = {Ann. Inst. H. Poincar{\'e} Probab. Statist.},
    volume = {50},
    number = {4},
    pages = {1404-1455},
    year = {2014},
    doi = {10.1214/13-AIHP548}
}

@unpublished{Diez2026,
    author = {Charles-Philippe Diez},
    title = {Obata's rigidity theorem in free probability},
    note = {Preprint, arXiv:2603.05466},
    year = {2026}
}

@article{FrischBourret1970,
 author = {Frisch, U. and Bourret, R.},
 title = {Parastochastics},
 fjournal = {Journal of Mathematical Physics},
 journal = {J. Math. Phys.},
 issn = {0022-2488},
 volume = {11},
 pages = {364--390},
 year = {1970},
 language = {English},
 doi = {10.1063/1.1665149},
 zbMATH = {3298792},
 Zbl = {0187.25902}
}

@article{GuionnetShlyakhtenko2014,
 author = {Guionnet, A. and Shlyakhtenko, D.},
 title = {Free monotone transport},
 fjournal = {Inventiones Mathematicae},
 journal = {Invent. Math.},
 issn = {0020-9910},
 volume = {197},
 number = {3},
 pages = {613--661},
 year = {2014},
 language = {English},
 doi = {10.1007/s00222-013-0493-9},
 zbMATH = {6358028},
 Zbl = {1312.46059}
}

@article{hayes2025selfless,
  title={Selfless reduced free product {$\mathrm{C}^*$}-algebras},
  author={Hayes, Ben and {Kunnawalkam Elayavalli}, Srivatsav and Robert, Leonel},
  journal={arXiv preprint arXiv:2505.13265},
  year={2025}
}

@article{Krolak2005,
    author = {Ilona Kr{\.o}lak},
    title = {Contractivity properties of Ornstein-Uhlenbeck semigroup for general commutation relations},
    journal = {Math. Z.},
    volume = {250},
    pages = {915-937},
    year = {2005},
    doi = {10.1007/s00209-005-0801-1}
}

@article{Kumar2023conjugate,
    author = {Kumar, Manish},
    title = {Conjugate variables approach to mixed $q$-Araki–Woods algebras: Factoriality and non-injectivity},
    journal = {Journal of Mathematical Physics},
    volume = {64},
    number = {9},
    pages = {093506},
    year = {2023},
    doi = {10.1063/5.0158660}
}

@article{KSW2023factoriality,
    author = {Manish Kumar and Adam Skalski and Mateusz Wasilewski},
    title = {Full Solution of the Factoriality Question for {$q$-Araki-Woods} von {N}eumann Algebras Via Conjugate Variables},
    journal = {Commun. Math. Phys.},
    volume = {402},
    pages = {157-167},
    year = {2023},
    doi = {10.1007/s00220-023-04734-5}
}

@unpublished{Lee2024conjugate,
    author = {Yoonkyeong Lee},
    title = {On conjugate systems with respect to completely positive maps},
    year = {2024},
    note = {Preprint, arXiv:2412.14336}
}

@phdthesis{MaiThesis,
    author = {Tobias Mai},
    title = {On the analytic theory of non-commutative distributions in free probability},
    school = {Universit{\"a}t des Saarlandes},
    year = {2017}
}

@article{MSW2017,
    title = {Absence of algebraic relations and of zero divisors under the assumption of full non-microstates free entropy dimension},
    journal = {Advances in Mathematics},
    volume = {304},
    pages = {1080-1107},
    year = {2017},
    doi = {10.1016/j.aim.2016.09.018},
    url = {https://www.sciencedirect.com/science/article/pii/S0001870815301183},
    author = {Tobias Mai and Roland Speicher and Moritz Weber}
}

@article{mandal2025simplicity,
  title={On the simplicity of certain mixed q-deformed Araki-Woods {C}{*}-algebras},
  author={Mandal, Malay and Mukherjee, Kunal and Patri, Issan},
  journal={Journal of Mathematical Analysis and Applications},
  volume={544},
  number={2},
  pages={129090},
  year={2025},
  publisher={Elsevier}
}

@book{MS2017,
	author = {James A. Mingo and Roland Speicher},
	title = {Free probability and random matrices},
	series = {Fields Institute Monographs},
	volume = {35},
	year = {2017},
	publisher = {Springer-Verlag},
	address = {New York}
}

@article{Miyagawa2023,
    author = {Miyagawa, Akihiro},
    title = {A short note on strong convergence of q-Gaussians},
    journal = {International Journal of Mathematics},
    volume = {34},
    number = {14},
    pages = {2350087},
    year = {2023},
    doi = {10.1142/S0129167X23500878}
}

@article{MiyagawaSpeicher2023,
    title = {A dual and conjugate system for q-Gaussians for all q},
    journal = {Advances in Mathematics},
    volume = {413},
    pages = {108834},
    year = {2023},
    issn = {0001-8708},
    doi = {10.1016/j.aim.2022.108834},
    author = {Akihiro Miyagawa and Roland Speicher}
}

@article{Nelson2015transport,
    author = {Brent Nelson},
    title = {Free Monotone Transport Without a Trace},
    journal = {Commun. Math. Phys.},
    volume = {334},
    pages = {1245-1298},
    year = {2015},
    doi = {10.1007/s00220-014-2148-0}
}

@article{Nou2004,
 author = {Nou, Alexandre},
 title = {Non-injectivity of the {{\(q\)}}-deformed von {Neumann} algebra},
 fjournal = {Mathematische Annalen},
 journal = {Math. Ann.},
 issn = {0025-5831},
 volume = {330},
 number = {1},
 pages = {17--38},
 year = {2004},
 language = {English},
 doi = {10.1007/s00208-004-0523-4},
 zbMATH = {2104994},
 Zbl = {1060.46051}
}

@phdthesis{Olesen2016,
    author = {Kristian Knudsen Olesen},
    title = {Analytic aspects of the Thompson groups},
    school = {University of Copenhagen},
    year = {2016},
    isbn = {978-87-7078-948-6}
}

@article{PSW2018,
    author = {Sandeepan Parekh and Koichi Shimada and Chenxu Wen},
    title = {Maximal amenability of the generator subalgebra in $q$-{G}aussian von {N}eumann algebras},
    journal = {J. Operator Theory},
    volume = {80},
    number = {1},
    year = {2018},
    pages = {125-152},
    doi = {10.7900/jot.2017jun28.2167}
}

@article{Ricard2005,
 author = {Ricard, {\'E}ric},
 title = {Factoriality of {{\(q\)}}-{Gaussian} von {Neumann} algebras},
 fjournal = {Communications in Mathematical Physics},
 journal = {Commun. Math. Phys.},
 issn = {0010-3616},
 volume = {257},
 number = {3},
 pages = {659--665},
 year = {2005},
 language = {English},
 doi = {10.1007/s00220-004-1266-5},
 zbMATH = {2221323},
 Zbl = {1079.81038}
}

@article{Vaes2012,
    title = {An inner amenable group whose von Neumann algebra does not have property {G}amma},
    author = {Stefaan Vaes},
    journal = {Acta Math.},
    volume = {208},
    number = {2},
    pages = {389-394},
    year = {2012},
    doi = {10.1007/s11511-012-0079-1}
}

@article{VoiculescuFE5,
	author = {Dan-Virgil Voiculescu},
	title = {The analogues of entropy and of {F}isher's information in free probability {V}},
	journal = {Inventiones Mathematicae},
	volume = {132},
	pages = {189-227},
	year = {1998},
	publisher = {Springer-Verlag},
	doi = {10.1007/s002220050222}
}

@article{Yang2024twisted,
    author = {Yang, Zhiyuan},
    title = {A Conjugate System for Twisted Araki–Woods von Neumann Algebras of Finite Dimensional Spaces},
    journal = {International Mathematics Research Notices},
    volume = {2024},
    number = {17},
    pages = {12044-12074},
    year = {2024},
    doi = {10.1093/imrn/rnae152}
}
\bibliographystyle{alpha}

\end{document}